\newcommand{\vast}{\bBigg@{3}}
\newcommand{\Vast}{\bBigg@{4}}
\newcommand{\vastl}{\mathopen\vast}
\newcommand{\vastr}{\mathclose\vast}
\newtheorem{theorem}{Theorem}[section]
\newtheorem{proposition}[theorem]{Proposition}
\newtheorem{corollary}[theorem]{Corollary}
\newtheorem{lemma}[theorem]{Lemma}
\newtheorem*{theorem*}{Theorem}
\newtheorem*{corollary*}{Corollary}
\theoremstyle{definition}
\numberwithin{equation}{section}
\newcommand{\dN}{\mathbb N}                     
\newcommand{\dZ}{\mathbb Z}                     
\newcommand{\dR}{\mathbb R}                     
\newcommand{\dC}{\mathbb C}                     
\newcommand{\ra}{\rightarrow}                   
\DeclareMathOperator{\e}{\mathrm{e}}                
\title[Ostrowski sum-of-digits function]%
{Pseudorandomness of the Ostrowski sum-of-digits function}
\author{Lukas Spiegelhofer}
\address{JKU Linz, Austria}
\subjclass[2010]{11A55, 11A63}
\keywords{Ostrowski numeration, pseudorandomness, Fourier--Bohr spectrum}
\thanks{
The author acknowledges support by the Austrian Science Fund (FWF), project F5505-N26, which is a part of the Special Research Program ``Quasi Monte Carlo Methods: Theory and Applications''.
}
\begin{document}
\maketitle
\begin{abstract}
For an irrational $\alpha\in(0,1)$, we investigate the Ostrowski sum-of-digits function $\sigma_\alpha$.
For $\alpha$ having bounded partial quotients and $\vartheta\in\mathbb R\setminus\mathbb Z$,
we prove that the function $g:n\mapsto \mathrm e(\vartheta \sigma_\alpha(n))$, where $\mathrm e(x)=\mathrm e^{2\pi i x}$, is pseudorandom in the following sense:
for all $r\in\mathbb N$ the limit
\[\gamma_r=
\lim_{N\rightarrow\infty}\frac 1N\sum_{0\leq n<N}g(n+r)\overline{g(n)} \]
exists and we have
\[\lim_{R\rightarrow\infty}\frac 1R\sum_{0\leq r<R}\bigl\lvert \gamma_r\bigr\rvert^2=0.\]

\end{abstract}
\section{Introduction and main results}
Let $g$ be an arithmetical function.
The set of      
$\beta\in[0,1)$ 
satisfying
\[
\limsup_{N\rightarrow\infty}\frac 1N
\biggl\lvert
\sum_{n<N}
g(n)\e(-n\beta)
\biggr\rvert
>0
\]
is called the \emph{Fourier--Bohr spectrum} of $g$.

The function $g$ is called \emph{pseudorandom in the sense of Bertrandias}~\cite{B1964}
or simply \emph{pseudorandom} if the limit
\[
\gamma_r
=
\lim_{N\rightarrow\infty}
\frac 1N
\sum_{0\leq n<N}g(n+r)\overline{g(n)}
\]
exists for all $r\geq 0$ and the family $\gamma$ is zero in quadratic mean, that is,
\begin{equation*}
\lim_{R\rightarrow\infty}
\frac 1R
\sum_{0\leq r<R}\bigl\lvert \gamma_r\bigr\rvert^2
=
0
.
\end{equation*}
(We note that by the Cauchy--Schwarz inequality this is equivalent to
$\frac 1R\sum_{r<R}\bigl\lvert \gamma_r\bigr\rvert=o(1)$
for bounded $g$.)
Pseudorandomness can be understood as a property of the \emph{spectral measure} associated to $g$:
Assume that the autocorrelation $\gamma$ of $g$ exists.
By the Bochner representation theorem there exists a unique measure $\mu$ on the Torus $\mathbb T=\mathbb R/\mathbb Z$ such that
\[\gamma_r=\int_{\mathbb T}\e(rx)\mathrm d\mu(x)
\]
for all $r$. Then $g$ is pseudorandom if and only if the discrete component of $\mu$ vanishes.
We refer to~\cite{CKM1977} for more details.

It is known that pseudorandomness of a bounded arithmetic function $g$ implies that the spectrum of $g$ is empty, which can be proved using van der Corput's inequality.
For the convenience of the reader, we give a proof of this fact in Section~\ref{sec:lemmas}.

The converse of this statement does not always hold.
However, it is true for $q$-multiplicative functions $g:\dN\rightarrow \mathbb T=\{z\in\dC:\lvert z\rvert =1 \}$, which has been proved by Coquet
\cite{C1976,C1978,C1980}.
Here a function $g:\dN\rightarrow \dC$ is called $q$-\emph{multiplicative} if
$f\bigl(q^kn+b\bigr)=f\bigl(q^kn\bigr)f(b)$ for all integers $k,n>0$ and $0\leq b<q^k$.

The purpose of this paper is to prove an analogous statement for the Ostrowski numeration system, that is, for $\alpha$-\emph{multiplicative} functions.
Assume that $\alpha\in(0,1)$ is irrational.
The Ostrowski numeration system has as its scale of numeration the sequence of denominators of the convergents of the regular continued fraction expansion of $\alpha$.
More precisely, let $\alpha=[0;a_1,a_2,\ldots]$ be the continued fraction expansion of $\alpha$ and $p_i/q_i=[0;a_1,\ldots,a_i]$ the $i$-th convergent to $\alpha$, where $i\geq 0$.
By the greedy algorithm, every nonnegative integer $n$ has a representation
\begin{equation}\label{eqn:greedy}
n=\sum_{k\geq 0}\varepsilon_k q_k
\end{equation}
such that
\[\sum_{0\leq k<K}\varepsilon_kq_k<q_K\] for all $K\geq 0$.
This algorithm yields the unique expansion of the form~\eqref{eqn:greedy} having the properties that
$0\leq \varepsilon_0<a_1$, $0\leq \varepsilon_k\leq a_{k+1}$ and
$\varepsilon_k=a_{k+1}\Rightarrow \varepsilon_{k-1}=0$ for $k\geq 1$,
the \emph{Ostrowski expansion of} $n$.

For a nonnegative integer $n$ let $(\varepsilon_k(n))_{k\geq 0}$ be its Ostrowski expansion.
An arithmetic function $g$ is $\alpha$-\emph{additive} resp. $\alpha$-\emph{multiplicative}
if
\[f(n)=\sum_{k\geq 0}f\bigl(\varepsilon_k(n)q_k\bigr)\quad\textrm{resp.}\quad
f(n)=\prod_{k\geq 0}f\bigl(\varepsilon_k(n)q_k\bigr)\]
for all $n$.
Examples of $\alpha$-additive functions are the functions $n\mapsto \beta n$ and the $\alpha$-\emph{sum of digits} of $n$~\cite{C1980b}:
\[\sigma_\alpha(n)=\sum_{i\geq 0}\varepsilon_i(n).\]
We refer the reader to~\cite{B2001} for a survey on the Ostrowski numeration system.
In particular, we want to note that the Ostrowski numeration system is a useful tool for studying the discrepancy modulo $1$ of $n\alpha$-sequences,
see for example the references contained in the aforementioned paper.

Moreover, see~\cite{BL2004} for a dynamical viewpoint of the Ostrowski numeration system, and~\cite{GLT1995, BBLT2006} for more general numeration systems.

Our main theorem establishes a connection between the Fourier--Bohr spectrum and pseudorandomness for $\alpha$-multiplicative functions.
\begin{theorem}\label{thm:main}
Assume that $g$ is a bounded $\alpha$-multiplicative function.
The Fourier--Bohr spectrum of $g$ is empty if and only if $g$ is pseudorandom.
\end{theorem}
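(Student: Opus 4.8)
The implication from pseudorandomness to emptiness of the Fourier--Bohr spectrum holds for every bounded arithmetic function (Section~\ref{sec:lemmas}), so the content of the theorem is the reverse implication. The plan is to realise it through the spectral measure $\mu$ of $g$: once the autocorrelation $\gamma_r$ is known to exist, $\mu$ is defined by Bochner's theorem via $\gamma_r=\int_{\mathbb T}\e(rx)\,\mathrm d\mu(x)$, and by Wiener's lemma $\frac1R\sum_{r<R}\lvert\gamma_r\rvert^2\to\sum_{\beta}\mu(\{\beta\})^2$; hence pseudorandomness of $g$ is the statement that $\mu$ is continuous, and everything reduces to showing that the atoms of $\mu$ are exactly the points of the spectrum.

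The arithmetic heart of the argument is to prove that, for every bounded $\alpha$-multiplicative $g$, the limits
\[
\gamma_r=\lim_{N\to\infty}\frac1N\sum_{0\le n<N}g(n+r)\overline{g(n)}
\qquad\text{and}\qquad
c(\beta)=\lim_{N\to\infty}\frac1N\sum_{0\le n<N}g(n)\e(-n\beta)
\]
exist, for all $r\in\dN$ and all $\beta\in\dR$. Writing $g(n)=\prod_{k\ge0}G_k\bigl(\varepsilon_k(n)\bigr)$ with $G_k(d)=g(dq_k)$, one has $g(n)\e(-n\beta)=\prod_{k\ge0}G_k(\varepsilon_k(n))\,\e\bigl(-q_k\varepsilon_k(n)\beta\bigr)$, so $c(\beta)$ is governed by the way the prefixes $\bigl(\varepsilon_0(n),\dots,\varepsilon_{K-1}(n)\bigr)$ distribute over admissible digit strings as $n$ runs through $[0,N)$. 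Splitting $[0,N)$ according to the high-order digits $\varepsilon_k(n)$, $k\ge K$, and invoking a van der Corput/Weyl-sum estimate — which uses $q_k\to\infty$ to annihilate the off-diagonal contributions and to accommodate the admissibility condition $\varepsilon_k=a_{k+1}\Rightarrow\varepsilon_{k-1}=0$ — yields convergence of the averages to an infinite product of averaged local factors, the tail converging because the diagonal dominates. For $\gamma_r$ one adds the observation that adding a fixed $r$ in the Ostrowski system affects the digits $\varepsilon_k(n)$ only in a window whose position and length are governed by a carry process (of a.s.\ finite, but generally unbounded, length); consequently $g(n+r)\overline{g(n)}$ factors over the digit positions with all but finitely many factors equal to $1$ for each $n$, and averaging over $n\in[0,N)$ by the same equidistribution input produces the limit $\gamma_r$.

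Granting this, the remainder is soft. Once $\gamma_r$ and $c(\beta)$ exist, a computation with Ces\`aro averages (legitimate because $g$ is bounded and both limits exist; this is the spectral picture of~\cite{CKM1977}) gives $\mu(\{\beta\})=\lvert c(\beta)\rvert^2$. Since $c(\beta)$ is an honest limit, $\limsup_{N\to\infty}\frac1N\bigl\lvert\sum_{n<N}g(n)\e(-n\beta)\bigr\rvert=\lvert c(\beta)\rvert$, so $\beta$ belongs to the Fourier--Bohr spectrum of $g$ if and only if $c(\beta)\ne0$, i.e.\ if and only if $\mu(\{\beta\})\ne0$. Therefore the spectrum is empty precisely when $\mu$ has no atoms, which by Wiener's lemma is precisely $\frac1R\sum_{r<R}\lvert\gamma_r\rvert^2\to0$; together with the trivial direction this is the asserted equivalence.

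The main obstacle is the equidistribution-and-carry analysis for the Ostrowski expansion underlying the existence of $\gamma_r$ and $c(\beta)$: encoding the admissibility constraint into the averaging, correctly handling addition by $r$ and its carry propagation when the partial quotients are unbounded, and, above all, establishing genuine convergence — not merely boundedness — of the infinite products representing $\gamma_r$ and $c(\beta)$. The spectral deduction above is by comparison routine and uses nothing special about the Ostrowski system beyond the existence of these limits.
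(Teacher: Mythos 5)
There is a genuine gap, and it sits exactly where you declare the argument to be ``soft''. Your reduction rests on the identity $\mu(\{\beta\})=\lvert c(\beta)\rvert^2$, claimed to follow from boundedness of $g$ together with the existence of $\gamma_r$ and $c(\beta)$. What actually follows in that generality (via Wiener's lemma and a Ces\`aro/van der Corput computation) is only the inequality $\mu(\{\beta\})\geq\lvert c(\beta)\rvert^2$: the set of atoms of $\mu$ \emph{contains} the Fourier--Bohr spectrum, not conversely. The reverse inclusion is false for general bounded sequences. For instance, let $g(n)=\e(n\theta_k)$ for $N_k\leq n<N_{k+1}$ with $\theta_k=1/k$ and $N_k=2^k$; then $\gamma_r=1$ for every $r$ (so $\mu=\delta_0$ and $g$ is as far from pseudorandom as possible), while $c(\beta)=0$ for every $\beta$, so the spectrum is empty. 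The paper itself warns that ``the converse of this statement does not always hold''; your soft deduction, if correct, would prove the converse for every bounded sequence with existing correlations and Fourier coefficients, which this example refutes. So the equality of the atoms of $\mu$ with the spectrum is not bookkeeping --- it \emph{is} the theorem, and it must be extracted from the $\alpha$-multiplicative structure.

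Concretely, the step you are missing is the uniformity statement (Proposition~\ref{prp:uniformity}): an empty spectrum forces $\sup_\beta\bigl\lvert\sum_{n<N}g(n)\e(-n\beta)\bigr\rvert=o(N)$, which is proved first for $N=q_i$ via the two-term recursion $\lvert S_{i+1}\rvert\leq\max\{\lvert S_i\rvert,\lvert S_{i-1}\rvert\}$ coming from $\alpha$-multiplicativity together with the Dini-type Lemma~\ref{lem:dini}, and then for general $N$ via van der Corput, the carry lemma and the large sieve. This uniformity yields $\sup_h\lvert G_\lambda(h)\rvert=o(1)$ for the truncated Fourier coefficients, hence $\sum_h\lvert G_\lambda(h)\rvert^4=o(1)$ by Parseval, and the correlation average $\frac1R\sum_{r<R}\lvert\gamma_r\rvert$ is then bounded by $\bigl(\sum_h\lvert G_\lambda(h)\rvert^4\bigr)^{1/2}(q_\lambda/R)^{1/2}$ plus carry errors. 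By contrast, your pointwise hypothesis $c(\beta)=0$ for each $\beta$ carries no uniformity and cannot rule out atoms of $\mu$, as the example shows. Your paragraph on the existence of $\gamma_r$ (carry propagation plus the equidistribution of truncated digit strings, i.e.\ Lemmas~\ref{lem:alpha_carry_lemma} and~\ref{lem:CRT}) is in the right spirit and matches Lemma~\ref{lem:multiplicative_correlation}, but that existence is a preliminary, not the heart of the proof.
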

Using a theorem by Coquet, Rhin and Toffin~\cite[Theorem~2]{CRT1983},
we obtain the following corollary.
\begin{corollary}
Assume that $\alpha\in(0,1)$ is irrational and has bounded partial quotients and $\vartheta\in\dR\setminus\dZ$.
Then $n\mapsto \e(\vartheta \sigma_\alpha(n))$ is pseudorandom.
\end{corollary}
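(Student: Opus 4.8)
The plan is to deduce the corollary from Theorem~\ref{thm:main} applied to $g\colon n\mapsto\e(\vartheta\sigma_\alpha(n))$, using \cite[Theorem~2]{CRT1983} to verify the spectral hypothesis. First I would record that this $g$ is a bounded $\alpha$-multiplicative function, so that Theorem~\ref{thm:main} is applicable. Boundedness is immediate since $\lvert g(n)\rvert=1$. For $\alpha$-multiplicativity, recall from the introduction that $\sigma_\alpha$ is $\alpha$-additive, with $\sigma_\alpha(n)=\sum_{k\geq0}\varepsilon_k(n)$ and $\sigma_\alpha\bigl(\varepsilon_k(n)q_k\bigr)=\varepsilon_k(n)$; applying $\e(\vartheta\,\cdot\,)$ turns this sum into the product
\[
g(n)=\e\Bigl(\vartheta\sum_{k\geq0}\varepsilon_k(n)\Bigr)=\prod_{k\geq0}\e\bigl(\vartheta\varepsilon_k(n)\bigr)=\prod_{k\geq0}g\bigl(\varepsilon_k(n)q_k\bigr),
\]
which is exactly the defining identity of an $\alpha$-multiplicative function.

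By Theorem~\ref{thm:main} it then suffices to show that the Fourier--Bohr spectrum of $g$ is empty, i.e.\ that
\[
\limsup_{N\rightarrow\infty}\frac1N\biggl\lvert\sum_{n<N}\e\bigl(\vartheta\sigma_\alpha(n)\bigr)\e(-n\beta)\biggr\rvert=0
\qquad\text{for every }\beta\in[0,1).
\]
This is where the hypotheses ``$\alpha$ has bounded partial quotients'' and ``$\vartheta\in\dR\setminus\dZ$'' enter: the decay of this twisted exponential sum is precisely what \cite[Theorem~2]{CRT1983} supplies. (Note that $\e(\vartheta\,\cdot\,)$ depends on $\vartheta$ only modulo $1$, and $\vartheta\notin\dZ$ is exactly the statement that this character is nontrivial, which is the nondegeneracy needed there.) Feeding the emptiness of the spectrum back into Theorem~\ref{thm:main} then yields that $g$ is pseudorandom.

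The only work on our side is the verification of $\alpha$-multiplicativity above, together with checking that \cite[Theorem~2]{CRT1983} is quoted in a form covering all $\beta\in[0,1)$ --- including $\beta=0$, which is the classical statement that $\sum_{n<N}\e(\vartheta\sigma_\alpha(n))=o(N)$ for $\alpha$ of bounded type. The genuinely hard analytic input, namely bounding $\sum_{n<N}\e\bigl(\vartheta\sigma_\alpha(n)-n\beta\bigr)$, is done in \cite{CRT1983} and is not reproved here; I expect the only friction to be matching their normalization and indexing conventions with the ones used above.
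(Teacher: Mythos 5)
Your proposal is correct and follows exactly the route the paper intends: the paper derives the corollary in one line by combining Theorem~\ref{thm:main} with \cite[Theorem~2]{CRT1983}, and your verification that $n\mapsto\e(\vartheta\sigma_\alpha(n))$ is a bounded $\alpha$-multiplicative function (via the $\alpha$-additivity of $\sigma_\alpha$ and $\sigma_\alpha(\varepsilon_k(n)q_k)=\varepsilon_k(n)$) supplies precisely the details the paper leaves implicit.
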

In particular, this holds for the \emph{Zeckendorf sum-of-digits function}, which corresponds to the case $\alpha=\bigl(\sqrt{5}-1\bigr)/2=[0;1,1,\ldots]$.
This special case can be found in the author's thesis~\cite{S2014}.

We first present a series of auxiliary results, and proceed to the proof of Theorem~\ref{thm:main} in section~\ref{sec:thmproof}.
\section{Lemmas}\label{sec:lemmas}
We begin with the well-known inequality of van der Corput.
\begin{lemma}[Van der Corput's inequality]\label{lem:vdc}
Let $I$ be a finite interval in $\dZ$ and
let $a_n\in\dC$ for $n\in I$.
Then
\[
\Biggl\lvert\sum_{n\in I}a_n\Biggr\rvert^2\leq
\frac{\lvert I\rvert-1+R}R\sum_{0\leq\lvert r\rvert<R}\biggl(1-\frac{\lvert r\rvert}R\biggr)
\sum_{\substack{n\in I\\n+r\in I}}a_{n+r}\overline{a_n}
\]
for all integers $R\geq 1$.
\end{lemma}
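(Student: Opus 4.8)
The plan is to run the classical averaging and Cauchy--Schwarz argument. First I extend the coefficients by setting $a_n=0$ for $n\in\dZ\setminus I$, so that every sum below may be taken over all of $\dZ$ without changing its value. Writing $I=\{L,L+1,\dots,M\}$, so that $\lvert I\rvert=M-L+1$, and introducing the enlarged interval $J=\{L-R+1,\dots,M\}$, which has exactly $\lvert I\rvert-1+R$ elements, I record the identity
\[
R\sum_{n\in I}a_n=\sum_{j=0}^{R-1}\sum_{n\in\dZ}a_{n+j}=\sum_{n\in J}\sum_{j=0}^{R-1}a_{n+j},
\]
where the first equality holds because each inner sum over $n$ equals $\sum_{m\in\dZ}a_m=\sum_{n\in I}a_n$, and the second holds because for $n\notin J$ every term $a_{n+j}$ with $0\le j<R$ vanishes.

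Next I apply the Cauchy--Schwarz inequality to the outer sum over the $\lvert I\rvert-1+R$ relevant values of $n$, obtaining
\[
R^2\Bigl\lvert\sum_{n\in I}a_n\Bigr\rvert^2\le\bigl(\lvert I\rvert-1+R\bigr)\sum_{n\in J}\Bigl\lvert\sum_{j=0}^{R-1}a_{n+j}\Bigr\rvert^2.
\]
I then expand the inner modulus squared as $\sum_{j,k=0}^{R-1}a_{n+j}\overline{a_{n+k}}$, group the pairs $(j,k)$ according to the value $r=j-k\in(-R,R)$ (there being $R-\lvert r\rvert$ such pairs for each fixed $r$), and reindex by $m=n+k$. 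Because $a$ is supported on $I$ and the translate $J+k$ contains $I$ for every $k$ with $0\le k<R$, each of the resulting inner sums equals $\sum_{m\in I,\,m+r\in I}a_{m+r}\overline{a_m}$, independently of the particular $j$ and $k$. Hence
\[
\sum_{n\in J}\Bigl\lvert\sum_{j=0}^{R-1}a_{n+j}\Bigr\rvert^2=\sum_{0\le\lvert r\rvert<R}\bigl(R-\lvert r\rvert\bigr)\sum_{\substack{n\in I\\ n+r\in I}}a_{n+r}\overline{a_n},
\]
and substituting this back, dividing by $R^2$, and writing $R-\lvert r\rvert=R\bigl(1-\lvert r\rvert/R\bigr)$ yields exactly the asserted inequality.

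The argument is entirely routine; the only point that needs care is the index bookkeeping — checking that $J$ has precisely $\lvert I\rvert-1+R$ elements, that $\sum_{j=0}^{R-1}a_{n+j}$ vanishes for $n\notin J$, and that $J+k\supseteq I$ for $0\le k<R$, so that every passage between sums over $J$, over $\dZ$, and over $I$ is an exact equality rather than an approximation. As a sanity check, for $R=1$ the inequality reduces to the plain Cauchy--Schwarz bound $\bigl\lvert\sum_{n\in I}a_n\bigr\rvert^2\le\lvert I\rvert\sum_{n\in I}\lvert a_n\rvert^2$.
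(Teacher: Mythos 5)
Your proof is correct: the extension of $a_n$ by zero, the averaging identity over the enlarged interval $J$ of cardinality $\lvert I\rvert-1+R$, Cauchy--Schwarz, and the diagonal regrouping by $r=j-k$ with multiplicity $R-\lvert r\rvert$ are all carried out accurately, and the index bookkeeping (in particular $J+k\supseteq I$ for $0\le k<R$, which makes the final regrouping an exact equality) checks out. The paper states this lemma without proof as a well-known fact, and your argument is precisely the standard derivation with the sharp constant $\lvert I\rvert-1+R$, so there is nothing to compare beyond noting that it is the canonical one.
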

In the definition of pseudorandomness for bounded arithmetic functions $g$,
we do not actually need the square.
\begin{lemma}\label{lem:L1}
Let $g$ be a bounded arithmetic function such that the correlation of $g$ exists.
The function $g$ is pseudorandom if and only if
\[\lim_{R\rightarrow\infty}
\frac 1R
\sum_{0\leq r<R}\lvert \gamma_r\rvert =0.
\]
\end{lemma}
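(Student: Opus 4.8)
The plan is to prove the two directions separately, the nontrivial one being that $\frac1R\sum_{r<R}\lvert\gamma_r\rvert^2\to 0$ implies $\frac1R\sum_{r<R}\lvert\gamma_r\rvert\to 0$; the reverse implication follows at once from $\lvert\gamma_r\rvert^2\le C\lvert\gamma_r\rvert$, where $C=\sup_r\lvert\gamma_r\rvert<\infty$ is finite because $g$ is bounded (say $\lvert g(n)\rvert\le B$, so $\lvert\gamma_r\rvert\le B^2$ for every $r$). Indeed, in that case
\[
\frac1R\sum_{0\le r<R}\lvert\gamma_r\rvert^2
\le
B^2\,\frac1R\sum_{0\le r<R}\lvert\gamma_r\rvert
,
\]
so vanishing of the right-hand side forces vanishing of the left.

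For the converse, the natural tool is the Cauchy--Schwarz inequality applied to the sum over $r$. One has
\[
\frac1R\sum_{0\le r<R}\lvert\gamma_r\rvert
=
\frac1R\sum_{0\le r<R}\lvert\gamma_r\rvert\cdot 1
\le
\Biggl(\frac1R\sum_{0\le r<R}\lvert\gamma_r\rvert^2\Biggr)^{1/2}
\Biggl(\frac1R\sum_{0\le r<R}1\Biggr)^{1/2}
=
\Biggl(\frac1R\sum_{0\le r<R}\lvert\gamma_r\rvert^2\Biggr)^{1/2}
,
\]
and the right-hand side tends to $0$ as $R\to\infty$ precisely because $g$ is pseudorandom. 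This already gives the claim, and in fact shows the equivalence does not even need the boundedness of $g$ for this direction; boundedness is only used in the first direction to bound $\lvert\gamma_r\rvert$ uniformly.

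There is essentially no obstacle here: both implications are one-line applications of elementary inequalities, and the only hypothesis genuinely used is that $g$ is bounded (guaranteeing $\sup_r\lvert\gamma_r\rvert<\infty$) together with the standing assumption that $\gamma_r$ exists for all $r$. I would present the argument in exactly the two displays above, remarking that this is the statement already announced parenthetically in the introduction.
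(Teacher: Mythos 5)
Your proposal is correct and matches the paper's own (very brief) argument: the paper likewise handles the direction from $\frac 1R\sum_{r<R}\lvert\gamma_r\rvert\to 0$ to pseudorandomness by normalizing so that $\lvert g\rvert\le 1$ (equivalently, your bound $\lvert\gamma_r\rvert^2\le B^2\lvert\gamma_r\rvert$), and the other direction by the Cauchy--Schwarz inequality exactly as you do.
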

For the proof of sufficiency we note that we may without loss of generality assume that $\lvert g\rvert\leq 1$. The other direction is an application of the Cauchy-Schwarz inequality.

As we noted before, pseudorandomness of $g$ implies that the spectrum of $g$ is empty.
\begin{lemma}\label{lem:pseudorandom_spectrum}
Let $g$ be a bounded arithmetic function.
If $g$ is pseudorandom, then the Fourier--Bohr spectrum of $g$ is empty.
\end{lemma}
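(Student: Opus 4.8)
The plan is to prove the contrapositive: if the Fourier--Bohr spectrum of $g$ is \emph{non-empty}, then $g$ is not pseudorandom. So suppose $\beta\in[0,1)$ satisfies
\[
\limsup_{N\rightarrow\infty}\frac 1N\biggl\lvert\sum_{n<N}g(n)\e(-n\beta)\biggr\rvert>0.
\]
Call this $\limsup$ value $c>0$. The key idea is that van der Corput's inequality (Lemma~\ref{lem:vdc}) relates the size of such an exponential sum to the correlations $\gamma_r$, but applied to the \emph{twisted} function $h(n)=g(n)\e(-n\beta)$ rather than to $g$ itself. Since $\lvert g\rvert$ is bounded, we may rescale and assume $\lvert g\rvert\leq 1$; then $\lvert h\rvert\leq 1$ as well. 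The correlations of $h$ are $\e(-r\beta)\gamma_r$ (the twist factors out of the limit defining $\gamma_r$), so $\lvert \gamma_r^{(h)}\rvert=\lvert\gamma_r\rvert$, and it suffices to show $\frac 1R\sum_{r<R}\lvert\gamma_r\rvert\not\to 0$, which by Lemma~\ref{lem:L1} contradicts pseudorandomness.

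The main step is then the following. Apply Lemma~\ref{lem:vdc} to $a_n=h(n)$ on the interval $I=\{0,\ldots,N-1\}$ with parameter $R$. This yields
\[
\biggl\lvert\sum_{n<N}h(n)\biggr\rvert^2\leq\frac{N-1+R}{R}\sum_{0\leq\lvert r\rvert<R}\biggl(1-\frac{\lvert r\rvert}R\biggr)\sum_{\substack{0\leq n<N\\0\leq n+r<N}}h(n+r)\overline{h(n)}.
\]
Divide by $N^2$ and let $N\rightarrow\infty$ along a subsequence realizing the $\limsup$; the inner sum over $n$, divided by $N$, converges to $\e(-r\beta)\gamma_r$ for $r\geq 0$ (and to its conjugate for $r<0$), so the left side is at least $c^2$ while the right side is at most $\frac 1R\sum_{0\leq\lvert r\rvert<R}\lvert\gamma_r\rvert+o(1)$ as $N\to\infty$, using $\lvert 1-\lvert r\rvert/R\rvert\leq 1$ and $(N-1+R)/R\sim N/R$. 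Hence for every fixed $R$,
\[
\frac 1R\sum_{0\leq\lvert r\rvert<R}\lvert\gamma_r\rvert\geq c^2>0,
\]
and since $\lvert\gamma_{-r}\rvert=\lvert\gamma_r\rvert$ this gives a uniform positive lower bound on the Cesàro averages of $\lvert\gamma_r\rvert$, so they cannot tend to $0$. By Lemma~\ref{lem:L1}, $g$ is not pseudorandom.

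The one technical point that needs care — and the likeliest obstacle — is the interchange of limits: the right-hand side of van der Corput's inequality is a finite sum (over $\lvert r\rvert<R$) of quantities each of which, after division by $N$, converges as $N\to\infty$; since $R$ is fixed this is a finite sum of convergent sequences, so the interchange is legitimate, but one must also track the truncation defect from requiring $0\leq n+r<N$ (it contributes only $O(\lvert r\rvert/N)=o(1)$ for fixed $R$) and the factor $(N-1+R)/R\to\infty$, which is why one divides by $N^2$ rather than $N$ before taking the limit. Everything else is bookkeeping. I would also remark explicitly that this lemma shows pseudorandomness is genuinely stronger than emptiness of the spectrum, motivating Theorem~\ref{thm:main}, which asserts their equivalence in the $\alpha$-multiplicative setting.
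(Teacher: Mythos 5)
Your proposal is correct and takes essentially the same route as the paper: van der Corput's inequality applied to the twisted sequence $n\mapsto g(n)\e(\mp n\beta)$, reducing the squared exponential sum to the Ces\`aro average of $\lvert\gamma_r\rvert$ and invoking Lemma~\ref{lem:L1}. The paper merely organizes this directly (choose $R$ with $\frac 1R\sum_{r<R}\lvert\gamma_r\rvert<\varepsilon^2$, then $N_0$) instead of as a contrapositive along a limsup-realizing subsequence, and twists by $\e(n\beta)$ inside the sum rather than passing to the auxiliary function $h$; these are cosmetic differences.
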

\begin{proof}
The proof is an application of van der Corput's inequality (Lemma \ref{lem:vdc}).
We have for all $R\in \{1,\ldots,N\}$
\begin{align*}
\Biggl\lvert\frac 1N\sum_{0\leq n<N}g(n)\e(n\beta)\Biggr\rvert^2
&\leq
\frac{N-1+R}{RN^2}\sum_{0\leq \lvert r\rvert<R}\biggl(1-\frac{\lvert r\rvert}{R}\biggr)
\e(r\beta)
\sum_{0\leq n,n+r<N}g(n+r)\overline{g(n)}
\\
&\ll
\frac 1R
\sum_{0\leq r<R}
\Biggl\lvert
\frac 1N
\sum_{0\leq n<N}
g(n+r)\overline{g(n)}
\Biggr\rvert
+O\biggl(\frac RN\biggr)
.
\end{align*}
Let $\varepsilon\in (0,1)$.
By hypothesis and Lemma~\ref{lem:L1} we may choose $R$ so large that 
\[
\frac 1R
\sum_{0\leq r<R}
\bigl\lvert\gamma_r\bigr\rvert
<
\varepsilon^2
.
\]
Moreover, we choose $N_0$ in such a way that
$R/N_0<\varepsilon^2$ and
\[
\Biggl\lvert
\frac 1N
\sum_{0\leq n<N}
g(n+r)\overline{g(n)}
-\gamma_r
\biggr\rvert <\varepsilon^2
\]
for all $r<R$ and $N\geq N_0$.
Then for $N\geq N_0$ we have
\[
\Biggl\lvert\frac 1N\sum_{0\leq n<N}g(n)\e(n\beta)\biggr\rvert^2
\ll
\frac 1R
\sum_{0\leq r<R}
\lvert\gamma_r\rvert
+
\frac 1R
\sum_{0\leq r<R}
\Biggl\lvert
\frac 1N
\sum_{0\leq n<N}
g(n+r)\overline{g(n)}
-\gamma_r
\Biggr\rvert
+
O\biggl(\frac R{N_0}\biggr)
<3\varepsilon^2
.
\]
\end{proof}

The following lemma is a generalization of Dini's Theorem.
\begin{lemma}\label{lem:dini}
Assume that $(f_i)_{i\geq 0}$ is a sequence of nonnegative continuous functions on $[0,1]$ converging pointwise to the zero function.
Assume that $\lvert f_{i+1}(x)\rvert\leq \max\bigl\{\lvert f_i(x)\rvert,\lvert f_{i-1}(x)\rvert\bigr\}$.
Then the convergence is uniform in $x$.
\end{lemma}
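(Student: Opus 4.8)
The plan is to prove the following modification of Dini's theorem. The classical Dini theorem requires the sequence $(f_i)$ to be monotone decreasing; here we only have the weaker hypothesis $f_{i+1}(x)\leq\max\{f_i(x),f_{i-1}(x)\}$ (dropping the absolute values, since the $f_i$ are nonnegative), so the sequence need not be monotone. The key observation is that even under this weaker hypothesis, the quantities $M_i(x)=\max\{f_i(x),f_{i+1}(x)\}$ are monotone decreasing in $i$: indeed $f_{i+1}(x)\leq M_i(x)$ trivially, and $f_{i+2}(x)\leq\max\{f_{i+1}(x),f_i(x)\}=M_i(x)$, so $M_{i+1}(x)=\max\{f_{i+1}(x),f_{i+2}(x)\}\leq M_i(x)$. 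Thus the monotone situation is recovered after passing to consecutive maxima.

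First I would define $M_i=\max\{f_i,f_{i+1}\}$ and record that each $M_i$ is continuous (as the pointwise maximum of two continuous functions) and nonnegative. Second, the monotonicity $M_{i+1}(x)\leq M_i(x)$ just explained shows $(M_i)$ is a decreasing sequence of nonnegative continuous functions. Third, I would check that $M_i\to 0$ pointwise: for fixed $x$, both $f_i(x)\to 0$ and $f_{i+1}(x)\to 0$, hence $M_i(x)\to 0$. Fourth, by the classical Dini theorem the convergence $M_i\to 0$ is uniform on the compact interval $[0,1]$: given $\varepsilon>0$ there is $i_0$ with $M_i(x)<\varepsilon$ for all $i\geq i_0$ and all $x\in[0,1]$. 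Finally, since $0\leq f_i(x)\leq M_i(x)$ for every $i$ and $x$ (as $f_i(x)\leq\max\{f_i(x),f_{i+1}(x)\}=M_i(x)$), we conclude $f_i(x)<\varepsilon$ for all $i\geq i_0$, uniformly in $x$, which is exactly uniform convergence of $(f_i)$ to $0$.

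I do not expect any serious obstacle: the only slightly delicate point is verifying the monotonicity of $(M_i)$, which uses the hypothesis for index $i+1$, namely $f_{i+2}(x)\leq\max\{f_{i+1}(x),f_i(x)\}$, to bound the ``new'' term $f_{i+2}$ appearing in $M_{i+1}$. One should also make sure the hypothesis is invoked only for $i\geq 1$ (where it is assumed) and handle $M_0$ and the base of the induction separately, but this is routine. After that the result is immediate from the classical Dini theorem applied to $(M_i)$ together with the sandwiching $0\leq f_i\leq M_i$.
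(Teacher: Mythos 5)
Your proof is correct and rests on the same key observation as the paper's: two consecutive small values force all later values to be small, and your monotone sequence $M_i=\max\{f_i,f_{i+1}\}$ is exactly the paper's increasing family of open sets $A_N=\{\xi\in[0,1]: f_N(\xi)<\varepsilon \text{ and } f_{N+1}(\xi)<\varepsilon\}$ in disguise (these are the sublevel sets $\{M_N<\varepsilon\}$). The only difference is presentational: you reduce cleanly to the classical Dini theorem via the sandwich $0\leq f_i\leq M_i$, whereas the paper reruns the finite-subcover compactness argument directly.
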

\begin{proof}
For $\varepsilon>0$, for nonnegative $N$ and $x\in[0,1]$ we set
\[A_N(x)=\{\xi\in[0,1]:f_N(\xi)<\varepsilon\mbox{ and }f_{N+1}(\xi)<\varepsilon\}.\]
Note that this is an open set.
By induction, using the property $\lvert f_{i+1}(x)\rvert\leq \max\bigl\{\lvert f_i(x)\rvert,\lvert f_{i-1}(x)\rvert\bigr\}$, we obtain
\[A_N(x)=\{\xi\in[0,1]:f_n(\xi)<\varepsilon\mbox{ for all }n\geq N\}.\]
Trivially, we have $A_N(x)\subseteq A_{N+1}(x)$.
For each $x\in [0,1]$ there is an $N(x)$ such that $f_n(x)<\varepsilon$ for all $n\geq N(x)$.
Then $x\in A_{N(x)}(x)$, therefore $\bigl(A_{N(x)}(x)\bigr)_{x\in[0,1]}$ is an open cover of the compact set $[0,1]$.
Choose $x_1,\ldots,x_k$ and $N_1,\ldots,N_k$ such that
$A_{N_1}(x_1)\cup\cdots\cup A_{N_k}(x_k)=[0,1]$ and set $N=\max\{N_1,\ldots,N_k\}$.
By monotonicity of the sets $A_N(x)$, we obtain
$A_N(x_1)\cup\cdots\cup A_N(x_k)=[0,1]$, in other words,
$f_n(\xi)<\varepsilon$ for all $\xi\in[0,1]$ and all $n\geq N$.
\end{proof}
\begin{lemma}\label{lem:enumeration}
Let $(w_i)_i$ be the increasing enumeration of the integers
$n$ such that $\varepsilon_0(n)=\cdots=\varepsilon_{\lambda-1}(n)=0$.
The intervals    
$[w_i,w_{i+1})$  
constitute a partition of the set $\dN$ into intervals of length $q_\lambda$ and $q_{\lambda-1}$,
where $w_{i+1}-w_i=q_{\lambda-1}$ if and only if $\varepsilon_\lambda\bigl(w_i\bigr)=a_{\lambda+1}$.
\end{lemma}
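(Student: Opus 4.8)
The plan is to determine, for each $n$ in $W:=\{w_i:i\ge 0\}=\{m\in\dN:\varepsilon_0(m)=\dots=\varepsilon_{\lambda-1}(m)=0\}$, the least element of $W$ that exceeds $n$, and to show this gap is $q_\lambda$ when $\varepsilon_\lambda(n)<a_{\lambda+1}$ and $q_{\lambda-1}$ when $\varepsilon_\lambda(n)=a_{\lambda+1}$. The partition assertion costs nothing: $w_0=0\in W$, and $q_K\in W$ for every $K\ge\lambda$ with $q_K\to\infty$, so the intervals $[w_i,w_{i+1})$ do cover $\dN$; only the statement about the lengths requires an argument.

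The main tool is the recursive decomposition of initial segments. For $K\ge 0$,
\[
[0,q_{K+1})=\Bigl(\bigsqcup_{0\le c<a_{K+1}}[cq_K,(c+1)q_K)\Bigr)\sqcup[a_{K+1}q_K,q_{K+1}),
\]
and a check of the digit rules and partial-sum conditions on the obvious candidate expansions shows that, for $c<a_{K+1}$, the shift $n'\mapsto cq_K+n'$ maps $[0,q_K)$ bijectively onto the $c$-th block, fixing every digit $\varepsilon_j$ with $j<K$ and setting $\varepsilon_K=c$; likewise $n'\mapsto a_{K+1}q_K+n'$ maps $[0,q_{K-1})$ bijectively onto the last block, fixing $\varepsilon_j$ for $j<K-1$ and setting $\varepsilon_{K-1}=0$, $\varepsilon_K=a_{K+1}$. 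Consequently these shifts respect $W$ (a condition on $\varepsilon_0,\dots,\varepsilon_{\lambda-1}$), and when $\lambda<K$ they also fix the digit $\varepsilon_\lambda$; moreover $W\cap[0,q_J)=\{0\}$ for every $J\le\lambda$.

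I would then prove by strong induction on $K\ge\lambda$ the following refinement: if $0=u_0<\dots<u_s=q_K$ enumerates $(W\cap[0,q_K))\cup\{q_K\}$, then for each $i<s$ one has $u_{i+1}-u_i=q_\lambda$ if $\varepsilon_\lambda(u_i)<a_{\lambda+1}$ and $u_{i+1}-u_i=q_{\lambda-1}$ if $\varepsilon_\lambda(u_i)=a_{\lambda+1}$. The base case $K=\lambda$ is immediate from $W\cap[0,q_\lambda)=\{0\}$. In the step to $K+1$ one assembles $W\cap[0,q_{K+1})$ block by block: a full block carries a shift of $W\cap[0,q_K)$ and the last block a shift of $W\cap[0,q_{K-1})$, so the gaps internal to a block are supplied by the hypothesis (at level $K$, respectively $K-1$); a gap straddling two consecutive blocks equals the terminal gap of the corresponding shorter segment $[0,q_K)$ or $[0,q_{K-1})$, again supplied by the hypothesis; and in each case the value of $\varepsilon_\lambda$ at the relevant left endpoint is recovered from the shift-invariance above. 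Passing to the limit via $\dN=\bigcup_{K\ge\lambda}[0,q_K)$ then reads off $w_{i+1}-w_i$ for all $i$.

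The one place that calls for genuine care is the bookkeeping at block boundaries, which is concentrated at the bottom level $K=\lambda$: there the shift $n'\mapsto cq_\lambda+n'$ moves $\varepsilon_\lambda$ (position $\lambda$ being the top digit of the block), so one checks by hand that each block $[cq_\lambda,(c+1)q_\lambda)$ with $c<a_{\lambda+1}$ contributes only $cq_\lambda$---which has $\varepsilon_\lambda=c<a_{\lambda+1}$ and is followed by a gap $q_\lambda$---while the last block contributes only $a_{\lambda+1}q_\lambda$---which has $\varepsilon_\lambda=a_{\lambda+1}$ and is followed by the gap $q_{\lambda+1}-a_{\lambda+1}q_\lambda=q_{\lambda-1}$; a few analogous coincidences at level $K=\lambda+1$ are equally harmless. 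Apart from this the argument is routine. (One could instead compute the $W$-successor of $n$ by adding $q_\lambda$ or $q_{\lambda-1}$ directly to the Ostrowski expansion, but that merely hides the same phenomenon inside a carry cascade through positions $\lambda,\lambda+1,\dots$, and is no shorter.)
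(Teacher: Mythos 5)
Your proof is correct, but it takes a genuinely different route from the paper. The paper argues \emph{locally} at each $w_i$: since $w_i$ has vanishing digits below position $\lambda$, the expansion of any $n\in[w_i,w_i+q_{\lambda-1})$ is the superposition of the expansions of $w_i$ and $n-w_i$, so no interior point of that interval lies in $W$; and when $\varepsilon_\lambda(w_i)=a_{\lambda+1}$ (which forces $\varepsilon_{\lambda-1}(w_i)=0$), the addition $w_i+q_{\lambda-1}$ triggers the carry $q_{\lambda-1}+a_{\lambda+1}q_\lambda=q_{\lambda+1}$, which clears all digits up to position $\lambda$ and lands the sum back in $W$ — and symmetrically with $q_\lambda$ when $\varepsilon_\lambda(w_i)<a_{\lambda+1}$. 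You instead run a \emph{global} strong induction on the self-similar decomposition of $[0,q_{K+1})$ into $a_{K+1}$ shifted copies of $[0,q_K)$ and one shifted copy of $[0,q_{K-1})$, reading off the gaps of $W$ segment by segment. Your version entirely avoids reasoning about carry propagation (the recurrence $q_{K+1}=a_{K+1}q_K+q_{K-1}$ enters only as the block count), and as a by-product it delivers the digit-preservation facts about the shifts $n'\mapsto cq_K+n'$ that the paper reuses informally elsewhere (e.g.\ in Lemma~\ref{lem:alpha_carry_lemma}); the cost is the boundary bookkeeping at levels $K=\lambda$ and $K=\lambda+1$, which you correctly isolate and check. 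The paper's argument is shorter but leans on the unproved assertion that the carry in $w_i+q_{\lambda-1}$ zeroes out every digit $\varepsilon_j$ with $j\le\lambda$ and does not disturb membership in $W$ after further cascading — a claim whose cleanest justification is essentially your block decomposition. Both proofs are sound; yours trades brevity for a more self-contained structural argument.
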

\begin{proof}
Assume first that $\varepsilon_\lambda(w_i)=a_{\lambda+1}$.
We want to show that $w_{i+1}=w_i+q_{\lambda-1}$.
Let $w_i\leq n<w_i+q_{\lambda-1}$.
Then the Ostrowski expansion of $n$ is obtained by superposition of the expansions of $w_i$ and of $n-w_i$.
In particular, for $w_i<n<w_i+q_{\lambda-1}$ we have $\varepsilon_j(n)\neq 0$ for some $j<\lambda-1$.
Moreover, in the addition $w_i+q_{\lambda-1}$ a carry occurs, producing $\varepsilon_j(w_i+q_{\lambda-1})=0$ for $j\leq\lambda$, therefore $w_{i+1}=w_i+q_{\lambda-1}$.
The case $\varepsilon_\lambda(w_i)<a_{\lambda+1}$ is similar, in which case
$w_{i+1}=w_i+q_\lambda$.
\end{proof}
For an $\alpha$-multiplicative function $g$ and an integer $\lambda\geq 0$ we define a function $g_\lambda$ by truncating the digital expansion:
we define
$\psi_\lambda(n)=\sum_{i<\lambda}\varepsilon_i(n)q_i$ and
\[g_\lambda(n)=g\bigl(\psi_\lambda(n)\bigr).\]
We will need the following carry propagation lemma for the Ostrowski numeration system.
\begin{lemma}\label{lem:alpha_carry_lemma}
Let $\lambda\geq 1$ be an integer and $N,r\geq 0$.
Assume that $\alpha\in(0,1)$ is irrational and let $g$ be an $\alpha$-multiplicative function.
Then
\begin{equation}\label{eqn:carry}
\Bigl\lvert
\bigl\{
n<N:g(n+r)\overline {g(n)}\neq g_\lambda(n+r)\overline {g_\lambda(n)}
\bigr\}
\Bigr\rvert\leq
N\frac r{q_{\lambda-1}}
.
\end{equation}

\end{lemma}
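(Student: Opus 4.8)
The plan is to reduce the exceptional set in \eqref{eqn:carry} to a combinatorial statement about the blocks of Lemma~\ref{lem:enumeration}. If $r\geq q_{\lambda-1}$, then the right-hand side of \eqref{eqn:carry} is at least $N$ and there is nothing to prove, so assume $0\leq r<q_{\lambda-1}$. Put $M_\lambda(n)=n-\psi_\lambda(n)=\sum_{k\geq\lambda}\varepsilon_k(n)q_k$. Using $\alpha$-multiplicativity (and $g(0)=1$, which may be assumed, the case $g\equiv 0$ being trivial) one has $g(n)=g\bigl(\psi_\lambda(n)\bigr)\,g\bigl(M_\lambda(n)\bigr)=g_\lambda(n)\,g\bigl(M_\lambda(n)\bigr)$. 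By Lemma~\ref{lem:enumeration}, $M_\lambda$ is constant on each block $[w_i,w_{i+1})$, in fact equal to $w_i$ there --- this follows from the superposition description of Ostrowski expansions in the proof of that lemma. Hence, if $n$ and $n+r$ lie in the same block, then $g(n+r)\overline{g(n)}=g_\lambda(n+r)\overline{g_\lambda(n)}$, the common unimodular factor $g(M_\lambda(n))=g(M_\lambda(n+r))$ cancelling. So the set in \eqref{eqn:carry} is contained in the set $E$ of those $n<N$ for which $n$ and $n+r$ lie in different blocks, and it suffices to show $\lvert E\rvert\leq Nr/q_{\lambda-1}$.

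I would count $E$ block by block. By Lemma~\ref{lem:enumeration}, each block has length $\ell_i:=w_{i+1}-w_i\in\{q_{\lambda-1},q_\lambda\}$, so $\ell_i\geq q_{\lambda-1}>r$; thus an $n\in[w_i,w_{i+1})$ lies in $E$ exactly when $n+r\geq w_{i+1}$, i.e.\ for the $r$ integers $n\in[w_{i+1}-r,w_{i+1})$. Let $M$ be the largest index with $w_M\leq N$. The blocks of index $<M$ lie inside $[0,N)$ and contribute $rM$ elements to $E$, and the (possibly empty) partial block $[w_M,N)$ contributes $\max\{0,\,N-w_{M+1}+r\}$. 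Since every $\ell_i\geq q_{\lambda-1}$ we get $w_M\geq Mq_{\lambda-1}$, hence $M\leq w_M/q_{\lambda-1}$, and $w_{M+1}=w_M+\ell_M$.

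It remains to combine these. If $N\leq w_{M+1}-r$, then $\lvert E\rvert\leq rM\leq rw_M/q_{\lambda-1}\leq Nr/q_{\lambda-1}$. Otherwise $\lvert E\rvert\leq rM+N-w_{M+1}+r\leq \tfrac{rw_M}{q_{\lambda-1}}+N-w_M-\ell_M+r$, and rearranging, the inequality $\lvert E\rvert\leq Nr/q_{\lambda-1}$ reduces to $(N-w_M)\cdot\tfrac{q_{\lambda-1}-r}{q_{\lambda-1}}\leq \ell_M-r$; this is true because $N-w_M<\ell_M$ while $\ell_M(q_{\lambda-1}-r)\leq q_{\lambda-1}(\ell_M-r)$, the latter being equivalent to $\ell_M\geq q_{\lambda-1}$. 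I expect this last step --- extracting the exact constant $q_{\lambda-1}$ --- to be the main obstacle: the block straddling $N$ must be treated carefully, its contribution being paid for by the slack in $M\leq N/q_{\lambda-1}$, which works only because that block, too, has length at least $q_{\lambda-1}$, so $w_{M+1}$ lies far enough beyond $w_M$. A smaller point to verify is the first-paragraph reduction, which rests on Lemma~\ref{lem:enumeration} and on $g$ being unimodular.
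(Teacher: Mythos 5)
Your proof is correct and follows essentially the same route as the paper: both reduce via Lemma~\ref{lem:enumeration} to the observation that within each block $[w_i,w_{i+1})$ only the $r$ integers $n$ with $n+r\geq w_{i+1}$ can be exceptional, and both pay for the partial block straddling $N$ using the slack coming from $w_{i+1}-w_i\geq q_{\lambda-1}$ (your explicit algebra replaces the paper's polygonal-line comparison of $L(N)$ and $R(N)$ at the three points $w_i$, $w_{i+1}-r$, $w_{i+1}$). Your side remark that the first reduction needs $\lvert g\rvert=1$ (or at least $\lvert g(\varepsilon q_k)\rvert=1$) is well taken --- the paper's ``it follows that'' tacitly uses the same unimodularity, which is how the lemma is applied in the main theorem.
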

\begin{proof}


The statement we want to prove is trivial for $r\geq q_{\lambda-1}$, we assume therefore that $r<q_{\lambda-1}$.
Let $w$ be the family from Lemma~\ref{lem:enumeration}.
For $w_i\leq n<w_{i+1}-r$,
we have $\varepsilon_j(n+r)=\varepsilon_j(n)$ for $j\geq \lambda$.
It follows that
\[
\Bigl\lvert\bigl\{n\in \{w_i,\ldots,w_{i+1}-1\}:g(n+r)\overline{g(n)}\neq g_\lambda(n+r)\overline{g_\lambda(n)}\bigr\}\Bigr\rvert\leq r
.
\]
By concatenating blocks, the statement follows therefore for the case that $N=w_i$ for some $i$.
It remains to treat the case that $w_i<N<w_{i+1}$ for some $i$.
To this end, we denote by $L(N)$ resp. $R(N)$ the left hand side resp. the right hand side of~\eqref{eqn:carry}.
For $w_i\leq N\leq w_{i+1}$ we have
\[L(N)=\begin{cases}L(w_i),&N\leq w_{i+1}-r;\\L(w_i)+N-(w_{i+1}-r),&N\geq w_{i+1}-r.\end{cases}\]
Note that $L(N)$ is a polygonal line that lies below $R(N)$ for
$N\in\{w_i,w_{i+1}-r,w_{i+1}\}$ and therefore for all $N\in[w_i,w_{i+1}]$.
By concatenating blocks, the full statement follows.
\end{proof}

We define Fourier coefficients for $g$:
\[
G_\lambda(h)=\frac 1{q_\lambda}
\sum_{0\leq u<q_\lambda}g(u)\e\bigl(huq_\lambda^{-1}\bigr)
.
\]
\begin{lemma}\label{lem:correlation}
Assume that $i$ be such that $w_{i+1}-w_i=q_\lambda$ and let $r\geq 0$. We have
\begin{equation}\label{eqn:correlation_partial_sums}
\sum_{h<q_\lambda}\bigl\lvert G_\lambda(h)\bigr\rvert^2\e\bigl(hrq_\lambda^{-1}\bigr)
=
\frac 1{q_\lambda}
\sum_{w_i\leq u<w_{i+1}}
g_\lambda(v+r)\overline{g_\lambda(v)}
+
O\biggl(\frac r{q_\lambda}\biggr)
.
\end{equation}
\end{lemma}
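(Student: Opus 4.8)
The plan is to evaluate the left-hand side of~\eqref{eqn:correlation_partial_sums} directly, expanding the square and using orthogonality of the additive characters modulo $q_\lambda$, and then to recognise the resulting cyclic correlation of $g$ as the stated block correlation, up to an error coming from wraparound. We may assume $r<q_\lambda$: summing the elementary identity $\sum_{h<q_\lambda}\e(hmq_\lambda^{-1})=q_\lambda$ or $0$ (according as $q_\lambda\mid m$ or not) shows $\sum_{h<q_\lambda}\lvert G_\lambda(h)\rvert^2=q_\lambda^{-1}\sum_{u<q_\lambda}\lvert g(u)\rvert^2=O(1)$, and the block sum on the right-hand side is likewise $O(1)$, so for $r\geq q_\lambda$ both sides are $O(1)=O(r/q_\lambda)$ and there is nothing to prove.

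Next I would substitute the definition of $G_\lambda$ to get $\lvert G_\lambda(h)\rvert^2=q_\lambda^{-2}\sum_{u,u'<q_\lambda}g(u)\overline{g(u')}\e\bigl(h(u-u')q_\lambda^{-1}\bigr)$. Multiplying by $\e(hrq_\lambda^{-1})$, summing over $0\leq h<q_\lambda$, and applying the orthogonality relation above, the left-hand side collapses to $q_\lambda^{-1}\sum g(u)\overline{g(u')}$, the sum extended over all pairs $0\leq u,u'<q_\lambda$ with $u-u'+r\equiv 0\pmod{q_\lambda}$. Since $0\leq r<q_\lambda$, for each $u$ there is exactly one admissible $u'$; it equals $u+r$ for the $q_\lambda-r$ indices $u<q_\lambda-r$, while for the remaining $r$ indices the pair is a wraparound pair. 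As $g$ is bounded, the wraparound pairs contribute $O(r/q_\lambda)$, so the left-hand side equals $q_\lambda^{-1}\sum_{0\leq u<q_\lambda-r}g(u)\overline{g(u+r)}+O(r/q_\lambda)$, the conjugation here being dictated by the sign chosen in the definition of $G_\lambda$.

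It remains to identify this main term with the block sum on the right-hand side. By Lemma~\ref{lem:enumeration} the block $[w_i,w_{i+1})$, being of length $q_\lambda$, is exactly $\{w_i+u:0\leq u<q_\lambda\}$, and — as in the proof of that lemma — the Ostrowski expansion of $w_i+u$ is the superposition of that of $w_i$, all of whose digits in positions $0,\dots,\lambda-1$ vanish, and that of $u<q_\lambda$; hence $\psi_\lambda(w_i+u)=u$ and $g_\lambda(w_i+u)=g(u)$. Re-indexing $v=w_i+u$ thus turns $q_\lambda^{-1}\sum_{w_i\leq v<w_{i+1}}g_\lambda(v+r)\overline{g_\lambda(v)}$ into $q_\lambda^{-1}\sum_{0\leq u<q_\lambda}g_\lambda(w_i+u+r)\overline{g(u)}$, in which $g_\lambda(w_i+u+r)=g(u+r)$ for the $q_\lambda-r$ indices with $u+r<q_\lambda$, while the other $r$ terms again sum to $O(r/q_\lambda)$ by boundedness of $g$. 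Comparing the two expressions finishes the proof. I expect the only real difficulty to be bookkeeping rather than anything conceptual: the finite Fourier computation naturally produces a \emph{cyclic} shift of $g$ modulo $q_\lambda$, whereas the right-hand side features an honest shift over the block, so one must carefully track the $O(r)$ boundary terms on each side — the wraparound pairs in the character sum and the top end of the block where $v+r$ leaves $[w_i,w_{i+1})$ — and check that each such term is $O(1)$ uniformly, which uses nothing beyond the boundedness of $g$.
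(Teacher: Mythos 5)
Your argument is correct and follows essentially the same route as the paper: expand $\lvert G_\lambda(h)\rvert^2$ by its definition, apply orthogonality of the additive characters modulo $q_\lambda$ to reduce the left-hand side to a cyclic correlation, transfer it to the block $[w_i,w_{i+1})$ via $g_\lambda(w_i+u)=g(u)$ (which uses the hypothesis $w_{i+1}-w_i=q_\lambda$ through Lemma~\ref{lem:enumeration}), and absorb the $O(r)$ wraparound and boundary terms using boundedness of $g$. You even address two points the paper's proof glosses over, namely the trivial case $r\geq q_\lambda$ and the conjugation of the main term forced by the sign convention in the definition of $G_\lambda$.
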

\begin{proof}
\begin{align*}
\sum_{0\leq h<q_\lambda}\bigl\lvert G_\lambda(h)\bigr\rvert^2\bigl(hrq_\lambda^{-1}\bigr)
&=
\frac 1{q_\lambda}\sum_{0\leq u,v<q_\lambda}g_\lambda(u)\overline{g_\lambda}(v)
\frac 1{q_\lambda}
\sum_{0\leq h<q_\lambda}\e\biggl(\frac h{q_\lambda}(v+r-u)\biggr)
\\
&=
\frac 1{q_\lambda}
\sum_{0\leq u,v<q_\lambda}\left[\!\left[v+r\equiv u\bmod q_\lambda\right]\!\right]g_\lambda(u)\overline{g_\lambda(v)}
\\
&=
\frac 1{q_\lambda}
\sum_{w_i\leq u,v<w_{i+1}}\left[\!\left[v+r\equiv u\bmod q_\lambda\right]\!\right]
g_\lambda(u)\overline{g_\lambda(v)}
\\
&=
\frac 1{q_\lambda}
\sum_{w_i\leq u<w_{i+1}-r}
g_\lambda(v+r)\overline{g_\lambda(v)}
+
O\biggl(\frac r{q_\lambda}\biggr)
\\
&=
\frac 1{q_\lambda}
\sum_{w_i\leq u<w_{i+1}}
g_\lambda(v+r)\overline{g_\lambda(v)}
+
O\biggl(\frac r{q_\lambda}\biggr)
.
\end{align*}
\end{proof}
\begin{lemma}\label{lem:sieve}
Let $H\geq 1$ be an integer and $R$ a real number.
For all real numbers $t$ we have
\[
\sum_{h<H}
\Biggl\lvert\frac 1R
\sum_{r<R}
\e\bigl(r\bigl(t+h/H\bigr)\bigr)
\Biggr\rvert^2
\leq \frac{H+R-1}{R}.
\]
\end{lemma}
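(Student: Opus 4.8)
The plan is to expand the squared modulus into a double sum in the variable $r$ and then exploit orthogonality of the additive characters modulo $H$ to collapse the sum over $h$.

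First I would write, for each fixed $h<H$,
\[
\Biggl\lvert\frac 1R\sum_{r<R}\e\bigl(r(t+h/H)\bigr)\Biggr\rvert^2
=\frac 1{R^2}\sum_{0\le r,s<R}\e\bigl((r-s)t\bigr)\e\bigl((r-s)h/H\bigr),
\]
and then sum over $h<H$, interchanging the order of summation so that the inner sum becomes $\sum_{h<H}\e\bigl((r-s)h/H\bigr)$. Since $r-s$ is an integer, this geometric sum equals $H$ when $H\mid r-s$ and $0$ otherwise; here the actual size of $R$ plays no role beyond guaranteeing that $r$ and $s$ are integers. Hence
\[
\sum_{h<H}\Biggl\lvert\frac 1R\sum_{r<R}\e\bigl(r(t+h/H)\bigr)\Biggr\rvert^2
=\frac H{R^2}\sum_{\substack{0\le r,s<R\\ H\mid r-s}}\e\bigl((r-s)t\bigr).
\]

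Next I would take absolute values term by term, so that the right-hand side is at most $\tfrac H{R^2}$ times the number of pairs of integers $(r,s)\in[0,R)^2$ with $r\equiv s\pmod H$. Writing $n_c$ for the number of integers in $[0,R)$ lying in the residue class $c$ modulo $H$, this count equals $\sum_{c<H}n_c^2$, which I would estimate by $\sum_{c<H}n_c^2\le\bigl(\max_{c}n_c\bigr)\sum_{c<H}n_c$. Here $\sum_{c<H}n_c$ is the total number of integers in $[0,R)$, which is at most $R$ (with equality when $R\in\dN$, the case needed in the applications), and each $n_c$ is at most $\lceil R/H\rceil\le (R+H-1)/H$. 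Combining these bounds yields $\tfrac H{R^2}\cdot\tfrac{R+H-1}{H}\cdot R=\tfrac{H+R-1}{R}$, which is exactly the claimed estimate.

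There is no serious obstacle: the argument is an exact evaluation of a character sum followed by one application of the triangle inequality and an elementary counting estimate. The only points deserving a line of care are the orthogonality identity $\sum_{h<H}\e(mh/H)=H\,\left[\!\left[H\mid m\right]\!\right]$ for $m\in\dZ$ and the optimisation of $\sum_{c<H}n_c^2$ under the constraint $\sum_{c<H}n_c\le R$, both of which are routine.
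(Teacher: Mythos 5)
Your proof is correct and self-contained, but it takes a genuinely different route from the paper. The paper does not prove this lemma by hand: it deduces it in one line from Selberg's form of the analytic large sieve inequality, $\sum_{r}\bigl\lvert\sum_n a_n\e(n\alpha_r)\bigr\rvert^2\leq(N-1+\delta^{-1})\sum_n\lvert a_n\rvert^2$, applied to the $1/H$-spaced points $\alpha_h=t+h/H$ with $a_n=1$. You instead exploit the special structure of these points — they form a coset of the subgroup $\frac1H\dZ/\dZ$ — so that after opening the square the sum over $h$ collapses by orthogonality to a count of pairs $r\equiv s\pmod H$, which you bound elementarily. The paper's approach buys brevity and works for arbitrary well-spaced points at the cost of invoking a nontrivial external theorem (whose optimal constant is exactly what is needed here); your approach buys a completely elementary, self-contained two-line computation that recovers the same constant $H+R-1$, precisely because the points are equally spaced.

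One small point of care: your parenthetical claim that the number of integers in $[0,R)$ is ``at most $R$'' is false for non-integral $R$ (it equals $\lceil R\rceil\geq R$), so your argument, as the lemma itself, really requires $R$ to be a positive integer. This is an imprecision in the statement of the lemma rather than in your proof — for instance $H=1$, $R=1/2$, $t=0$ gives left-hand side $4$ and right-hand side $1$ — and $R$ is indeed a positive integer in the only application (it is the van der Corput parameter in Proposition~\ref{prp:uniformity}). You correctly flag this, so there is no gap in substance.
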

This lemma is an immediate consequence of the analytic form of the large sieve, see \cite[Theorem 3]{M1978}. This form of the theorem, featuring the optimal constant $N-1+\delta^{-1}$, is due to Selberg.
\begin{lemma}[Selberg]
Let $N\geq 1,R\geq 1, M$ be integers,
$\alpha_1,\ldots,\alpha_R\in\dR$ and
$a_{M+1},\ldots,$ $a_{M+N}\in\dC$.
Assume that $\bigl \lVert \alpha_r-\alpha_s\bigr \rVert\geq \delta$ for $r\neq s$.
Then
\[
\sum_{r=1}^R
\Biggl\lvert
\sum_{n=M+1}^{M+N}
a_n\e(n\alpha_r)
\biggr\rvert^2
\leq
\bigl(N-1+\delta^{-1}\bigr)
\sum_{n=M+1}^{M+N}
\bigl\lvert a_n\bigr\rvert^2
.
\]
\end{lemma}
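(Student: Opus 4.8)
The plan is to reduce the statement to its dual form and then prove the dual inequality by means of the Beurling--Selberg extremal function; this is the standard route to the sharp form of the large sieve. By the duality principle (a bounded operator and its adjoint have the same operator norm, applied to the map $(a_n)_{M<n\le M+N}\mapsto\bigl(\sum_n a_n\e(n\alpha_r)\bigr)_{1\le r\le R}$), the asserted inequality with constant $N-1+\delta^{-1}$ is equivalent to
\[
\sum_{n=M+1}^{M+N}\Bigl\lvert\sum_{r=1}^R c_r\e(n\alpha_r)\Bigr\rvert^2\le\bigl(N-1+\delta^{-1}\bigr)\sum_{r=1}^R\lvert c_r\rvert^2\qquad\text{for all }c_1,\dots,c_R\in\dC .
\]
Passing to the adjoint only replaces each $\alpha_r$ by $-\alpha_r$, which affects neither the separation hypothesis $\lVert\alpha_r-\alpha_s\rVert\ge\delta$ nor the constant, so it suffices to establish this dual inequality for every admissible configuration. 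We may also assume $0<\delta\le1$ (automatic when $R\ge2$, since $\lVert\alpha_r-\alpha_s\rVert\le\tfrac12$; the case $R=1$ is the Cauchy--Schwarz inequality).

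To prove the dual inequality I would bring in the Beurling--Selberg majorant. For every interval $J$ of length $\ell$ there is an entire function $B^+_J$ of exponential type $2\pi$ that is nonnegative on $\dR$, dominates $\mathbf 1_J$ on $\dR$ with $B^+_J-\mathbf 1_J\in L^1(\dR)$ and $\int_\dR(B^+_J-\mathbf 1_J)=1$, hence $\int_\dR B^+_J=\ell+1$, and whose Fourier transform $\widehat{B^+_J}$ is continuous and supported in $[-1,1]$, so $\widehat{B^+_J}(\pm1)=0$; see \cite{M1978}, where this construction (due to Beurling and Selberg) is presented. Choosing $J$ of length $\ell:=\delta(N-1)$ and positioned so that $\beta(x):=B^+_J(\delta x)$ dominates $\mathbf 1_{[M+1,M+N]}$ on all of $\dR$, one gets $\beta\ge0$, $\beta\in L^1(\dR)$ (with $\beta(x)\ll x^{-2}$), Fourier transform $\widehat\beta(t)=\delta^{-1}\widehat{B^+_J}(t/\delta)$ continuous and supported in $[-\delta,\delta]$, and $\widehat\beta(0)=\int_\dR\beta=\delta^{-1}(\ell+1)=N-1+\delta^{-1}$.

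The dual inequality then follows from Poisson summation. Since $\beta\ge0$ everywhere and $\beta(n)\ge1$ for $M<n\le M+N$, its left-hand side is at most
\[
\sum_{n\in\dZ}\beta(n)\Bigl\lvert\sum_r c_r\e(n\alpha_r)\Bigr\rvert^2=\sum_{r,s}c_r\overline{c_s}\sum_{n\in\dZ}\beta(n)\e\bigl(n(\alpha_r-\alpha_s)\bigr),
\]
and Poisson summation --- applicable because $\beta$ is continuous with $\sum_{n\in\dZ}\beta(n)<\infty$ while $\widehat\beta$ is continuous with compact support --- turns the inner sum into $\sum_{k\in\dZ}\widehat\beta(k-\gamma)$ with $\gamma=\alpha_r-\alpha_s$. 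For $r=s$ only $k=0$ survives, because $\lvert k\rvert\ge1\ge\delta$ for $k\ne0$ and $\widehat\beta(\pm\delta)=0$, so the contribution is $\widehat\beta(0)=N-1+\delta^{-1}$. For $r\ne s$ every integer $k$ has $\lvert k-\gamma\rvert\ge\lVert\gamma\rVert\ge\delta$, whence $\widehat\beta(k-\gamma)$ can be nonzero only if $\lvert k-\gamma\rvert=\delta$, where $\widehat\beta$ still vanishes; thus the off-diagonal terms contribute nothing. Therefore the right-hand side equals $(N-1+\delta^{-1})\sum_r\lvert c_r\rvert^2$, which is precisely the dual inequality, and the lemma follows.

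I expect the one genuinely non-routine step to be the construction of the majorant $B^+_J$ with the sharp mass $\ell+1$: this rests on Selberg's explicit formula in terms of the Beurling function $B(x)\ge\operatorname{sgn}(x)$ of exponential type $2\pi$ with $\int_\dR(B-\operatorname{sgn})=1$, and everything else is bookkeeping (the separation of the diagonal, the support argument, and the routine verification that Poisson summation applies). The softer argument via Hilbert's inequality in the manner of Montgomery and Vaughan yields only the weaker constant $N+\delta^{-1}$, which is why Selberg's refinement is needed here; since in the present paper the lemma is only invoked, not reproved, the majorant may be cited rather than built from scratch.
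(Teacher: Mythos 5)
Your proposal is essentially correct, but note that the paper offers no proof of this lemma at all: it is quoted verbatim as Selberg's sharp form of the large sieve and referred to Montgomery's survey \cite{M1978}, so there is nothing internal to compare against. What you have written is the standard proof of the sharp constant: duality to reduce to the bilinear form in the $c_r$, then the Selberg majorant $\beta(x)=B^+_J(\delta x)$ with $J$ of length $\delta(N-1)$, nonnegative, dominating the indicator of $\{M+1,\dots,M+N\}$, with $\widehat\beta$ continuous, supported in $[-\delta,\delta]$, vanishing at $\pm\delta$, and $\widehat\beta(0)=N-1+\delta^{-1}$; Poisson summation then kills the off-diagonal terms because of the separation hypothesis and leaves exactly $\widehat\beta(0)\sum_r\lvert c_r\rvert^2$ on the diagonal. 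All the individual steps check out (the adjoint only negates the $\alpha_r$, which preserves the spacing; $B^+_J\in L^1$ with $\int B^+_J=\ell+1$ gives the right mass after rescaling; the Paley--Wiener support statement plus continuity gives $\widehat{B^+_J}(\pm1)=0$; and the decay $\beta(x)\ll x^{-2}$ legitimizes Poisson summation). The one genuinely substantive ingredient, the existence of the Beurling--Selberg majorant with mass exactly $\ell+1$, you defer to the literature rather than construct, which is a reasonable division of labour here --- the paper itself cites the whole lemma --- but it does mean your argument is a reduction to that extremal construction rather than a self-contained proof. Your remark that the Montgomery--Vaughan/Hilbert-inequality route only gives $N+\delta^{-1}$ is accurate and explains why the majorant method is the right one for the constant as stated; your handling of the implicit normalization $0<\delta\le 1$ (automatic for $R\ge 2$, Cauchy--Schwarz for $R=1$) is also a sensible reading of the statement.
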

As an important first step in the proof of Theorem~\ref{thm:main}, we show that for the functions in question we have the following uniformity property.
\begin{proposition}\label{prp:uniformity}
Let $g$ be a bounded $\alpha$-multiplicative function.
Assume that the Fourier--Bohr spectrum of $g$ is empty, that is,
\[
\biggl\lvert
\sum_{0\leq n<N}
g(n)\e(-n\beta)
\biggr\rvert
=o(N)
\]
as $N\ra\infty$ for all $\beta\in\dR$. Then
\[
\sup_{\beta\in\dR}
\biggl\lvert
\sum_{0\leq n<N}
g(n)\e(-n\beta)
\biggr\rvert
=o(N).
\]
\end{proposition}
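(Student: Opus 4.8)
The plan is to reduce the supremum over $\beta$ to a finite, $N$-independent set of frequencies by exploiting the $\alpha$-multiplicative structure, and then to apply the (already proved) pointwise hypothesis together with a compactness/uniform-convergence argument. First I would fix $\lambda$ and split $\{0,\dots,N-1\}$ into the blocks $[w_i,w_{i+1})$ of Lemma \ref{lem:enumeration}, on each of which $g = g_\lambda \cdot g(\varepsilon_\lambda(n)q_\lambda)\cdots$ factors cleanly; using the carry lemma (Lemma \ref{lem:alpha_carry_lemma}, or rather the analogous one-variable version implicit in it) the exponential sum $\sum_{n<N} g(n)\e(-n\beta)$ is, up to an error $O(Nr/q_{\lambda-1})$ that we will not need here but which signals the mechanism, well approximated by a sum in which the top digits are decoupled. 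Concretely, on a block $[w_i,w_i+q_\lambda)$ one has $\sum_{w_i\le n<w_i+q_\lambda} g(n)\e(-n\beta) = \e(-w_i\beta)\sum_{0\le u<q_\lambda} g(u)\e(-u\beta)$, and the latter is $q_\lambda\cdot G_\lambda$-type quantity evaluated at a frequency near a rational with denominator $q_\lambda$. The key point is that $\sum_{0\le u<q_\lambda} g(u)\e(-u\beta)$, as a function of $\beta$, is a trigonometric polynomial of degree $<q_\lambda$, so its sup-norm is controlled by its values; and more importantly, the full sum over the first $i$ blocks telescopes into a geometric-type series in the block-displacements $w_{i+1}-w_i\in\{q_{\lambda-1},q_\lambda\}$.

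The main idea for handling the sup is the following. Define, for each $\lambda$, the function
\[
\Phi_\lambda(\beta) = \limsup_{N\to\infty}\frac 1N\biggl\lvert \sum_{0\le n<N} g_\lambda(n)\e(-n\beta)\biggr\rvert,
\]
and more usefully a continuous majorant of the normalized sum built from the block decomposition. Using the above factorization, $\frac 1N\sum_{n<N} g(n)\e(-n\beta)$ is bounded by an average over blocks of the quantities $\bigl\lvert \frac 1{q_\lambda}\sum_{u<q_\lambda} g(u)\e(-u\beta)\bigr\rvert$ times block weights, plus a contribution from the block-interface exponential $\sum_i \e(-w_i\beta)$; iterating this over $\lambda$ produces a nested sequence of continuous functions $f_\lambda(\beta)$ (trigonometric polynomials, hence continuous on $\mathbb T$) with the domination property $f_{\lambda+1}(\beta)\le \max\{f_\lambda(\beta), f_{\lambda-1}(\beta)\}$, precisely the hypothesis of the Dini-type Lemma \ref{lem:dini}. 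The pointwise hypothesis (empty Fourier--Bohr spectrum) says $f_\lambda(\beta)\to 0$ for every $\beta$; Lemma \ref{lem:dini} then upgrades this to uniform convergence, which is exactly the claimed uniformity $\sup_\beta\lvert\sum_{n<N} g(n)\e(-n\beta)\rvert = o(N)$.

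I expect the main obstacle to be setting up the nested family $f_\lambda$ correctly: one must define the $f_\lambda$ so that (i) they genuinely dominate $\frac 1N\lvert\sum_{n<N} g(n)\e(-n\beta)\rvert$ for all large $N$ simultaneously (so that uniform smallness of $f_\lambda$ transfers to the actual sums), (ii) they are continuous in $\beta$, and (iii) they satisfy the three-term max inequality of Lemma \ref{lem:dini}. Point (iii) is delicate because the block lengths $q_{\lambda-1}, q_\lambda$ alternate in a way governed by the top digit $\varepsilon_\lambda(w_i) = a_{\lambda+1}$, so passing from level $\lambda$ to level $\lambda+1$ naturally involves both levels $\lambda$ and $\lambda-1$ — which is in fact why Lemma \ref{lem:dini} was stated with a two-step recursion rather than simple monotonicity. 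The carry lemma controls the discrepancy between $g$ and $g_\lambda$ but only for $r < q_{\lambda-1}$, so some care is needed to let $\lambda\to\infty$ with $N$; choosing $\lambda = \lambda(N)$ growing slowly (e.g. so that $q_\lambda = o(N)$ while $q_{\lambda-1}\to\infty$) should reconcile the two limiting procedures. Once the family $f_\lambda$ is in place, the conclusion is immediate from Lemma \ref{lem:dini}.
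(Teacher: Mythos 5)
Your plan is correct in substance, and its centrepiece coincides with the paper's: both arguments rest on the normalized sums $S_i(\beta)=q_i^{-1}\sum_{0\le u<q_i}g(u)\e(-u\beta)$, on the two-term domination $\lvert S_{i+1}(\beta)\rvert\le\max\bigl\{\lvert S_i(\beta)\rvert,\lvert S_{i-1}(\beta)\rvert\bigr\}$ --- which the paper derives from the recurrence $q_{i+1}S_{i+1}=\bigl(\sum_{0\le b<a_{i+1}}h(bq_i)\bigr)q_iS_i+h(a_{i+1}q_i)q_{i-1}S_{i-1}$ and which you obtain, equivalently, from the block decomposition of $[0,q_{i+1})$ --- and on Lemma~\ref{lem:dini} to upgrade the pointwise hypothesis (applied with $N=q_i$) to uniform convergence along the scale $(q_i)$. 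Where you genuinely depart from the paper is the passage from the scale $(q_i)$ to arbitrary $N$: you apply the triangle inequality block by block, using that $g(n)=g(w_i)g(n-w_i)$ on $[w_i,w_{i+1})$, to get $\bigl\lvert\sum_{n<N}g(n)\e(-n\beta)\bigr\rvert\le N\max\bigl\{\lvert S_\lambda(\beta)\rvert,\lvert S_{\lambda-1}(\beta)\rvert\bigr\}+O(q_\lambda)$, and then let $\lambda\to\infty$ slowly with $N$; the paper instead runs the partial sum through van der Corput's inequality, the carry lemma, the Fourier coefficients $G_\lambda(h)$ and the large sieve (Lemma~\ref{lem:sieve}). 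Your route is shorter and avoids both the carry lemma and the sieve; what the paper's heavier detour buys is the Fourier-coefficient machinery that is reused in the proof of Theorem~\ref{thm:main}, not anything needed for this proposition. Three small repairs to your write-up: the block factorization carries an extra factor $g(w_i)$ of modulus at most one, which you dropped (harmless); the ``block-interface exponential'' $\sum_i\e(-w_i\beta)$ never arises once you take absolute values blockwise, so no geometric or telescoping series is needed; and the $\limsup$ quantity $\Phi_\lambda$ should be discarded in favour of $f_\lambda=\lvert S_\lambda\rvert$, which is continuous and $1$-periodic in $\beta$, tends to $0$ pointwise by the hypothesis applied along $N=q_\lambda$, and satisfies the three-term inequality, so that Lemma~\ref{lem:dini} applies directly.
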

\begin{proof}[Proof of Proposition~\ref{prp:uniformity}]
Without loss of generality we may assume that $\lvert g\rvert\leq 1$, since the full statement follows by scaling.
We first prove the special case 
\[
\lim_{i\rightarrow\infty}
\sup_{\beta\in\dR}
 \frac 1{q_i}
\biggl\lvert
\sum_{0\leq n<q_i}g(n)\e(-n\beta)
\biggr\rvert=0.
\]
We set $h(n)=g(n)\e(-n\beta)$ and
\[
S_i(\beta) =
\frac 1{q_i}
\sum_{0\leq n<q_i}h(n).
\]
For all $i\geq 1$ we have
\begin{align*}
S_{i+1}&=\frac 1{q_{i+1}}
\sum_{0\leq b<a_{i+1}}\sum_{0\leq u<q_i}h(u+bq_i)
+\frac 1{q_{i+1}}
\sum_{0\leq u<q_{i-1}}h(u+a_{i+1}q_i)
\\
&=
\frac {q_i}{q_{i+1}}
\biggl(\sum_{0\leq b<a_{i+1}}h(bq_i)\biggr)
\cdot S_i
+
\frac {q_{i-1}}{q_{i+1}}
h(a_{i+1}q_i)
S_{i-1}.
\end{align*}
Using the recurrence for $q_i$, it follows that
$\lvert S_{i+1}\rvert\leq\max\bigl\{\lvert S_i\rvert,\lvert S_{i-1}\rvert\bigr\}$.
By Lemma~\ref{lem:dini} we obtain the statement.

We pass to the general case.
We consider partial sums of $g(n)\e(n\beta)$ up to $N$.
Assume that $w_i\leq N<w_{i+1}$.
We have
\[
\Biggl\lvert
\sum_{0\leq n<N}g(n)\e(n\beta)
\Biggr\rvert^2
\leq
\Biggl\lvert
\sum_{0\leq n<w_i}g(n)\e(n\beta)
\Biggr\vert^2
+q_\lambda^2
+2Nq_\lambda
.
\]
We apply the inequality of van der Corput (Lemma \ref{lem:vdc}) to obtain
\[
\Biggl\lvert
\sum_{0\leq n<w_i}g(n)\e(n\beta)
\Biggr\rvert^2
\leq
\frac{N+R-1}{R}
\sum_{\lvert r\rvert<R}
\biggl(1-\frac{\lvert r\rvert}{R}\biggr)
\e(r\beta)
\sum_{0\leq n,n+r<w_i}
g(n+r)
\overline{g(n)}
.
\]
We adjust the summation range by omitting the condition $0\leq n+r<w_i$.
This introduces an error term $O(NR)$.
Moreover, $\alpha$-additive functions $f$ satisfy
Lemma \ref{lem:alpha_carry_lemma},
therefore we may replace $g$ by $g_\lambda$ for the price of another error term,
$O(N^2Rq_{\lambda-1}^{-1})$.
Using (\ref{eqn:correlation_partial_sums}) we get
\begin{align*}
\hskip 15pt&\hskip -15pt
\Biggl\lvert
\sum_{0\leq n<w_i}
g(n)\e(n\beta)
\Biggr\rvert^2
\\
&\ll
\frac NR
\sum_{\lvert r\rvert<R}
\biggl(1-\frac{\lvert r\rvert}{R}\biggr)
\e(r\beta)
\Biggl(
\sum_{0\leq n<w_i}
g_\lambda(n+r)\overline{g_\lambda(n)}
+
O\bigl(R+NRq_{\lambda-1}^{-1}\bigr)
\Biggr)
\\
&\ll
NR+N^2\frac R{q_{\lambda-1}}
+
\frac NR
w_i
\sum_{h<q_\lambda}
\bigl \lvert G_\lambda(h)\bigr \rvert^2
\sum_{\lvert r\rvert<R}
\biggl(1-\frac{\lvert r\rvert}{R}\biggr)
\e\biggl(r\biggl(\beta+\frac h{q_\lambda}\biggr)\biggr)
.
\end{align*}
Note that the sum over $r$ is a nonnegative real number.
This follows from the identity
\[
\sum_{\lvert r\rvert<R}(R-\lvert r\rvert)\e(rx)
=
\Biggl\lvert \sum_{0\leq r<R}\e(rx)\Biggr\rvert^2,
\]
which can be proved by 
an elementary combinatorial argument.
We use this equation and collect the error terms to get
\begin{equation}\label{eqn:fourier_coefficient_reduction}
\Biggl\lvert
\frac 1N
\sum_{0\leq n<N}
g(n)\e(n\beta)
\Biggr\rvert^2
\ll
\frac{q_\lambda^2}{N^2}
+\frac{q_\lambda}N
+
\frac RN
+
\frac{R}{q_{\lambda-1}}
+
\sum_{0\leq h<q_\lambda}
\bigl\lvert G_\lambda(h)\bigr\rvert^2
\Biggl\lvert
\frac 1R
\sum_{0\leq r<R}
\e\biggl(r\biggl(\beta+\frac h{q_\lambda}\biggr)\biggr)
\Biggr\rvert^2
.
\end{equation}
Next, using Lemma~\ref{lem:sieve} we get
\begin{equation}\label{eqn:spaced_sum}
\sum_{0\leq h<q_\lambda}
\bigl\lvert G_\lambda(h)\bigr\rvert^2
\Biggl\lvert
\frac 1R
\sum_{0\leq r<R}
\e\biggl(r\biggl(\beta+\frac h{q_\lambda}\biggr)\biggr)
\Biggr\rvert^2
\leq
\sup_{0\leq h<q_\lambda}
\bigl\lvert G_\lambda(h)\bigr\rvert^2
\frac {q_\lambda+R-1}R
.
\end{equation}
Using the special case proved before and choosing $R$ and $\lambda$ appropriately, we obtain the statement.
\end{proof}

In order to establish the existence of the correlation $\gamma_t$ of $g$,
we use the following theorem~\cite[Th\'eor\`eme 4]{CRT1981}.
(Note that we defined $\psi_\lambda(n)=\sum_{0\leq i<\lambda}\varepsilon_i(n)q_i$.)
\begin{lemma}[Coquet--Rhin--Toffin]\label{lem:CRT}
Let $\lambda\geq 1$ and $a<q_\lambda$. The set
$\mathcal E(\lambda,a)=\{n\in\dN:\psi_\lambda(n)=a\}$
possesses an asymptotic density given by
\begin{align*}
\delta&=(q_\lambda+q_{\lambda-1}[0;a_{\lambda+1},\ldots])^{-1}&\mbox{if }a\geq q_{\lambda-1};\\
\delta'&=\delta(1+[0;a_{\lambda+1},\ldots])&\mbox{if }a<q_{\lambda-1}.
\end{align*}
\end{lemma}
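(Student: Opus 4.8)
\emph{Proof strategy.} The plan is to realise $\mathcal E(\lambda,a)$ as a set of return times of the rotation $x\mapsto x+\alpha$ on $\dR/\dZ$ to an explicit interval, and then read the density off from Weyl's equidistribution theorem. Write $\eta_k=q_k\alpha-p_k$, so that the $\eta_k$ alternate in sign and $\lvert\eta_0\rvert>\lvert\eta_1\rvert>\cdots\to 0$; the recurrences $q_{k+1}=a_{k+1}q_k+q_{k-1}$ and $p_{k+1}=a_{k+1}p_k+p_{k-1}$ give $\lvert\eta_{m-1}\rvert=a_{m+1}\lvert\eta_m\rvert+\lvert\eta_{m+1}\rvert$, hence $\sum_{k\ge\lambda}a_{k+1}\lvert\eta_k\rvert=\lvert\eta_{\lambda-1}\rvert+\lvert\eta_\lambda\rvert$. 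From $n=\sum_k\varepsilon_k(n)q_k$ we obtain $n\alpha\equiv\sum_k\varepsilon_k(n)\eta_k\pmod 1$. Writing $\tau_a=\sum_{0\le k<\lambda}\varepsilon_k(a)\eta_k$, membership $n\in\mathcal E(\lambda,a)$ is then (since $n\mapsto\{n\alpha\}$ is injective) equivalent to $\{n\alpha\}\in\bigl(\tau_a+T_\lambda(a)\bigr)\bmod 1$, where $T_\lambda(a)$ is the set of tail sums $\sum_{k\ge\lambda}\varepsilon_k\eta_k$ over all digit strings $(\varepsilon_k)_{k\ge\lambda}$ that are admissible for the Ostrowski system ($0\le\varepsilon_k\le a_{k+1}$ and $\varepsilon_k=a_{k+1}\Rightarrow\varepsilon_{k-1}=0$ for $k>\lambda$) subject to the leading restriction $\varepsilon_\lambda\le a_{\lambda+1}$ if $a<q_{\lambda-1}$ (equivalently $\varepsilon_{\lambda-1}(a)=0$) and $\varepsilon_\lambda\le a_{\lambda+1}-1$ if $a\ge q_{\lambda-1}$.

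The heart of the matter is that $\overline{T_\lambda(a)}$ is a single interval, of length $\lvert\eta_{\lambda-1}\rvert+\lvert\eta_\lambda\rvert$ when $a<q_{\lambda-1}$ and of length $\lvert\eta_{\lambda-1}\rvert$ when $a\ge q_{\lambda-1}$. For the length one notes that the supremum and infimum of $\sum_{k\ge\lambda}\varepsilon_k\eta_k$ are attained by the strings that take $\varepsilon_k$ as large resp.\ as small as the sign of $\eta_k$ and the carry rule permit; such strings are admissible, since the carry rule only links $\varepsilon_k=a_{k+1}$ with $\varepsilon_{k-1}=0$ and these indices have opposite parity. Evaluating the two extremal sums with the identity above yields exactly the two stated lengths, the extra $-\lvert\eta_\lambda\rvert$ in the second case coming from the cap $\varepsilon_\lambda\le a_{\lambda+1}-1$. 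That the whole interval between the extremes is covered is an odometer argument: enumerating the admissible tails in the order of the integers $\sum_{k\ge\lambda}\varepsilon_kq_k$ they represent, consecutive tail sums differ by a quantity tending to $0$ (the digits that change are confined, modulo carries, to high positions where $\lvert\eta_k\rvert$ is tiny), so $T_\lambda(a)$ is dense in that interval. As a built-in consistency check, the translates $\tau_a+\overline{T_\lambda(a)}$, $0\le a<q_\lambda$, are pairwise essentially disjoint and cover $\dR/\dZ$, and their total length $q_{\lambda-1}\bigl(\lvert\eta_{\lambda-1}\rvert+\lvert\eta_\lambda\rvert\bigr)+(q_\lambda-q_{\lambda-1})\lvert\eta_{\lambda-1}\rvert=q_\lambda\lvert\eta_{\lambda-1}\rvert+q_{\lambda-1}\lvert\eta_\lambda\rvert$ equals $1$, because $\lvert\eta_\lambda\rvert=[0;a_{\lambda+1},\ldots]\,\lvert\eta_{\lambda-1}\rvert$ and $\lvert\eta_{\lambda-1}\rvert=\bigl(q_\lambda+q_{\lambda-1}[0;a_{\lambda+1},\ldots]\bigr)^{-1}$.

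Granting the interval claim, we finish by equidistribution. Since $\alpha$ is irrational, $(n\alpha)_{n\ge0}$ is equidistributed modulo $1$, so for the set $J=\bigl(\tau_a+\overline{T_\lambda(a)}\bigr)\bmod 1$ (a union of at most two intervals) we get $\bigl\lvert\{n<N:n\in\mathcal E(\lambda,a)\}\bigr\rvert=N\lvert J\rvert+o(N)$; the finitely many values of $n$ with $\{n\alpha\}$ on the (measure-zero) boundary are absorbed in the error. Hence $\mathcal E(\lambda,a)$ has density $\lvert\eta_{\lambda-1}\rvert$ for $a\ge q_{\lambda-1}$ and $\lvert\eta_{\lambda-1}\rvert+\lvert\eta_\lambda\rvert$ for $a<q_{\lambda-1}$. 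Using the two continued-fraction identities recalled above, both consequences of $\alpha=(p_\lambda+p_{\lambda-1}\alpha_\lambda)/(q_\lambda+q_{\lambda-1}\alpha_\lambda)$ with $\alpha_\lambda=[0;a_{\lambda+1},a_{\lambda+2},\ldots]$, the first density equals $\delta=\bigl(q_\lambda+q_{\lambda-1}[0;a_{\lambda+1},\ldots]\bigr)^{-1}$ and the second equals $\lvert\eta_{\lambda-1}\rvert(1+\alpha_\lambda)=\delta(1+[0;a_{\lambda+1},\ldots])=\delta'$, as asserted.

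The main obstacle is the second paragraph, namely proving that $\overline{T_\lambda(a)}$ is exactly the interval predicted by the extremal digit strings: this needs the careful sign-and-carry bookkeeping together with the odometer density argument. An alternative that repackages the same combinatorics is an induction on $\lambda$: by Lemma~\ref{lem:enumeration}, $\dN$ splits into consecutive blocks of length $q_\lambda$ or $q_{\lambda-1}$; each $a<q_{\lambda-1}$ occurs once as $\psi_\lambda$ in every block and each $a$ with $q_{\lambda-1}\le a<q_\lambda$ once in every long block, so the two densities are governed by the limiting frequency of short blocks; the block starts form $\mathcal E(\lambda,0)$, and their digits in positions $\ge\lambda$ run through the admissible tails, which are the Ostrowski-type expansion attached to $[0;a_{\lambda+1},a_{\lambda+2},\ldots]$, closing a recursion whose solution is the stated $\delta,\delta'$. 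Either way, the crucial input is the same piece of Ostrowski combinatorics; the rotation route makes the final computation of the densities most transparent.
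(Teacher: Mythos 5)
The paper does not prove this lemma at all: it is quoted verbatim from Coquet--Rhin--Toffin \cite[Th\'eor\`eme~4]{CRT1981} and used as a black box, so there is no internal proof to compare yours against. Judged on its own, your strategy is the standard (and correct) one: code the Ostrowski digits by the rotation $x\mapsto x+\alpha$ via $n\alpha\equiv\sum_k\varepsilon_k(n)\eta_k\pmod 1$, show that $\psi_\lambda(n)=a$ corresponds to $\{n\alpha\}$ landing in an explicit interval, and read off the density from Weyl equidistribution. All of your quantitative ingredients check out: $\lvert\eta_{m-1}\rvert=a_{m+1}\lvert\eta_m\rvert+\lvert\eta_{m+1}\rvert$ and its telescoped form, the identity $q_\lambda\lvert\eta_{\lambda-1}\rvert+q_{\lambda-1}\lvert\eta_\lambda\rvert=1$, the identification $\delta=\lvert\eta_{\lambda-1}\rvert$ and $\delta'=\lvert\eta_{\lambda-1}\rvert+\lvert\eta_\lambda\rvert$, and the observation that the cap $\varepsilon_\lambda\leq a_{\lambda+1}-1$ (forced exactly when $\varepsilon_{\lambda-1}(a)\neq 0$, i.e.\ $a\geq q_{\lambda-1}$) removes a length $\lvert\eta_\lambda\rvert$ from the interval. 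This is essentially the classical link between Ostrowski expansions and the three-distance partition of the circle by $\{0,\alpha,\dots,(q_\lambda-1)\alpha\}$, as surveyed in \cite{B2001}.

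The one step that does not hold up as written is your justification that $T_\lambda(a)$ is dense in the interval between its extremal values. Enumerating the admissible tails by the integers $m=\sum_{k\geq\lambda}\varepsilon_kq_k$ they represent, consecutive such integers differ by $q_\lambda$ or $q_{\lambda-1}$ (your Lemma~\ref{lem:enumeration}), so consecutive tail sums differ by exactly $\eta_\lambda$ or $\eta_{\lambda-1}$ --- fixed nonzero quantities of opposite sign, not ``a quantity tending to $0$'' along the enumeration. A back-and-forth walk with two fixed step sizes confined to an interval need not a priori be dense in it, so this sentence does not prove the claim. Fortunately, what you dismiss as a ``built-in consistency check'' is actually the missing proof: the $q_\lambda$ sets $\tau_a+\overline{T_\lambda(a)}$ are each contained in a closed interval $I_a$ of the computed length, their union contains the dense set $\{\{n\alpha\}:n\geq 0\}$ and is closed, hence is all of $\dR/\dZ$; since $\sum_a\lvert I_a\rvert=1$ the $I_a$ must then have pairwise disjoint interiors, and any open gap of $\tau_a+\overline{T_\lambda(a)}$ inside the interior of $I_a$ would be an open set missing every $\{n\alpha\}$, a contradiction. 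So $\overline{T_\lambda(a)}=I_a-\tau_a$ exactly. I recommend you promote this covering/measure argument to the actual proof of the interval claim and drop the odometer sentence; with that repair the proof is complete.
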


\begin{lemma}\label{lem:multiplicative_correlation}
Let $g$ be a bounded $\alpha$-multiplicative function.
Then for every $r\geq 0$ the limit
\[
\lim_{N\to\infty}\frac 1N\sum_{n<N}g(n+r)\overline{g(n)}
\]
exists.
\end{lemma}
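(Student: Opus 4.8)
The plan is to prove the existence of the correlation $\gamma_r = \lim_{N\to\infty}\frac1N\sum_{n<N}g(n+r)\overline{g(n)}$ by a two-step reduction, mirroring the structure already used in Lemma~\ref{lem:alpha_carry_lemma} and Lemma~\ref{lem:CRT}. First, I would replace the true product $g(n+r)\overline{g(n)}$ by the truncated version $g_\lambda(n+r)\overline{g_\lambda(n)}$, at the cost of an error term controlled by the carry propagation lemma (Lemma~\ref{lem:alpha_carry_lemma}): the set of $n<N$ on which the two disagree has size at most $N r/q_{\lambda-1}$, which is $o(N)$ once $\lambda$ is large (with $r$ fixed). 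So it suffices to show that $\lim_{N\to\infty}\frac1N\sum_{n<N}g_\lambda(n+r)\overline{g_\lambda(n)}$ exists for each fixed $\lambda$, and that these limits form a Cauchy sequence in $\lambda$ (or more simply, to show the double limit is well-behaved).

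Second, with $\lambda$ and $r$ fixed, the function $n\mapsto g_\lambda(n+r)\overline{g_\lambda(n)}$ depends only on $\psi_{\lambda'}(n)$ for $\lambda'$ slightly larger than $\lambda$ — more precisely, within a block $[w_i, w_{i+1})$ coming from the partition of Lemma~\ref{lem:enumeration} (at the appropriate level), the quantity $\psi_\lambda(n+r)$ and $\psi_\lambda(n)$ are determined by $n$ together with whether a carry past level $\lambda$ occurs, and carries are confined to the last $r$ entries of each block. Thus $g_\lambda(n+r)\overline{g_\lambda(n)}$ takes finitely many values, each on a set that is a finite union of the classes $\mathcal E(\mu, a)$ appearing in Lemma~\ref{lem:CRT} (for a suitable level $\mu$), up to a boundary set of density zero. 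Since each $\mathcal E(\mu,a)$ has an asymptotic density by the Coquet--Rhin--Toffin lemma, the Cesàro average $\frac1N\sum_{n<N}g_\lambda(n+r)\overline{g_\lambda(n)}$ converges as $N\to\infty$; call the limit $\gamma_r^{(\lambda)}$.

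It remains to combine the two steps. Fix $r$ and let $\varepsilon>0$. Choose $\lambda$ so large that $r/q_{\lambda-1} < \varepsilon$ (possible since $q_{\lambda-1}\to\infty$). Then for all $N$,
\[
\Biggl\lvert \frac1N\sum_{n<N}g(n+r)\overline{g(n)} - \frac1N\sum_{n<N}g_\lambda(n+r)\overline{g_\lambda(n)}\Biggr\rvert \leq \frac{2\lVert g\rVert_\infty^2}{N}\cdot N\frac{r}{q_{\lambda-1}} \leq 2\lVert g\rVert_\infty^2\,\varepsilon.
\]
Hence the sequence $\bigl(\frac1N\sum_{n<N}g(n+r)\overline{g(n)}\bigr)_N$ is within $2\lVert g\rVert_\infty^2\varepsilon$ of the convergent sequence whose limit is $\gamma_r^{(\lambda)}$, so its $\limsup$ and $\liminf$ differ by at most $4\lVert g\rVert_\infty^2\varepsilon$; letting $\varepsilon\to0$ shows the limit $\gamma_r$ exists (and equals $\lim_{\lambda\to\infty}\gamma_r^{(\lambda)}$).

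The main obstacle is the bookkeeping in the second step: making precise the claim that, within each block of the Lemma~\ref{lem:enumeration} partition at a level $\mu > \lambda$, the value of $g_\lambda(n+r)\overline{g_\lambda(n)}$ is constant on the classes $\{\psi_{\mu}(n) = a\}$ up to a set of $n$ of asymptotic density zero. One must track exactly how carries from adding $r$ propagate past level $\lambda-1$, and argue that the exceptional set — where $n$ is within $r$ of a block boundary, or where a carry reaches level $\lambda$ — is a finite union of classes $\mathcal E(\mu,a)$ with total density $O(r/q_{\lambda-1})$ or smaller, which vanishes as $\mu$ (then $\lambda$) grows. Everything else is an assembly of the cited lemmas.
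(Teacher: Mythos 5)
Your proposal follows essentially the same route as the paper: truncate $g$ to $g_\lambda$ via the carry-propagation Lemma~\ref{lem:alpha_carry_lemma}, show that the truncated correlation has a Ces\`aro limit using the Coquet--Rhin--Toffin density result (Lemma~\ref{lem:CRT}), and then pass to the limit in $\lambda$ by an $\varepsilon$/Cauchy argument. The paper organizes the middle step as a count of the two block types from Lemma~\ref{lem:enumeration}, which is equivalent to your appeal to the classes $\mathcal E(\mu,a)$, so the arguments coincide in substance.
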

We note that the existence of the correlation was established in~\cite{CRT1981} for the special case that $g(n)=\e(y \sigma_\alpha(n))$, where $\e(x)=\e^{2\pi i x}$.
\begin{proof}
Let $\lambda,N\geq 0$ and $r\geq 1$ and set $k=\max\{j:w_j\leq N\}$.
Moreover, let $a=a(N)$ be the number of indices $j<k$ such that $w_{j+1}-w_j=q_\lambda$ and $b=b(N)$ be the number of indices $j<k$ such that $w_{j+1}-w_j=q_{\lambda-1}$.
By Lemma~\ref{lem:CRT} $a(N)/N$ and $b(N)/N$ converge, say to $A$ and $B$ respectively.
Let $\lambda$ be so large that $r/q_{\lambda-1}<\varepsilon$.
Moreover, choose $N_0$ so large that
$\bigl\lvert A-a(N)/N\bigr\rvert<\varepsilon q_\lambda^{-1}$, $\bigl\lvert B-b(N)/N\bigr\rvert<\varepsilon q_{\lambda-1}^{-1}$ and $q_\lambda/N<\varepsilon$ for all $N\geq N_0$.

Then by Lemma \ref{lem:alpha_carry_lemma} we get
\begin{align*}
\sum_{0\leq n<N}g(n+r)\overline{g(n)}
&=
\sum_{0\leq n<N}g_\lambda(n+r)\overline{g_\lambda(n)}
+
O\bigl(Nrq_{\lambda-1}^{-1}\bigr)
\\&=
\sum_{0\leq n<w_k}g_\lambda(n+r)\overline{g_\lambda(n)}
+
O\bigl(q_\lambda+Nrq_{\lambda-1}^{-1}\bigr)
,
\end{align*}
therefore
\begin{align*}
\hskip 10pt&\hskip -10pt
\Biggl\lvert
\frac 1N\sum_{0\leq n<N}g(n+r)\overline{g(n)}
-
A
\sum_{0\leq n<q_\lambda}g_\lambda(n+r)\overline{g_\lambda(n)}
-
B
\sum_{0\leq n<q^{\lambda-1}}g_\lambda(n+r)\overline{g_\lambda(n)}
\Biggr\rvert
\\
&\ll
\Biggl\lvert
\frac 1N\sum_{0\leq n<N}g(n+r)\overline{g(n)}
-
\frac aN
\sum_{0\leq n<q_\lambda}g_\lambda(n+r)\overline{g_\lambda(n)}
-
\frac bN
\sum_{0\leq n<q_{\lambda-1}}g_\lambda(n+r)\overline{g_\lambda(n)}
\Biggr\rvert+2\varepsilon
\\&=
\Biggl\lvert
\frac 1N\sum_{0\leq n<N}g(n+r)\overline{g(n)}
-
\frac 1N
\sum_{0\leq n<w_k}g_\lambda(n+r)\overline{g_\lambda(n)}\Biggr\rvert+2\varepsilon
\\&\ll \frac {q_\lambda}{N}+\frac r{q_{\lambda-1}} +2\varepsilon.
\end{align*}
By the triangle inequality it follows that the values
$\frac 1N\sum_{n<N}g(n+r)\overline{g(n)}$
form a Cauchy sequence and therefore a convergent sequence,
which proves the existence of the correlation of $g$.
\end{proof}
\section{Proof of the theorem}\label{sec:thmproof}
Now we are prepared to prove Theorem~\ref{thm:main}.
If $g$ is pseudorandom, then by Lemma~\ref{lem:pseudorandom_spectrum} its spectrum is empty. We are therefore concerned with the converse.
Let $\ell\geq 0$. We denote by $a$ the number of $i<\ell$ such that
$w_{i+1}-w_i=q_\lambda$ and by $b$ the number of $i<\ell$ such that
$w_{i+1}-w_i=q_{\lambda-1}$.

Choose $\varepsilon_r$ such that $\lvert \varepsilon_r\rvert=1$ and
\[\varepsilon_r
  \sum_{0\leq h<q_\lambda}{
    \bigl\lvert G_\lambda(h)\bigr\rvert^2\e\bigl(hrq_\lambda^{-1}\bigr)}
\]
is a nonnegative real number.
Similarly choose $\varepsilon'_r$ for $\lambda-1$.
We have
\begin{align*}
\hskip 30pt&\hskip -30pt
  \frac 1R\sum_{0\leq r<R}
  \Biggl\lvert
    \frac 1{w_\ell}\sum_{0\leq n<w_\ell}
    g_\lambda(n+r)\overline{g_\lambda(n)}
  \Biggr\rvert
\\&=
\Biggl\lvert
  \frac {aq_\lambda}{w_\ell}
  \frac 1R\sum_{0\leq r<R}
  \varepsilon_r
  \sum_{0\leq h<q_\lambda}{
    \bigl\lvert G_\lambda(h)\bigr\rvert^2\e\biggl(\frac{hr}{q_\lambda} \biggr)}
\\&+
  \frac {bq_{\lambda-1}}{w_\ell}
  \frac 1R\sum_{0\leq r<R}
  \varepsilon'_r
  \sum_{0\leq h<q_{\lambda-1}}{
    \Bigl\lvert G_{\lambda-1}(h)\Bigr\rvert^2\e\biggl(\frac{hr}{q_{\lambda-1}}\biggr)
  }
\Biggr\rvert
+
O\biggl(\frac {ar}{w_\ell}+\frac{br}{w_\ell}\biggr)
\\&=
\frac 1R
\Biggl\lvert
  \frac {aq_{\lambda}}{w_\ell}
  \sum_{0\leq h<q_\lambda}{
    \bigl\lvert G_\lambda(h)\bigr\rvert^2
    \sum_{0\leq r<R}{
      \varepsilon_r
      e\biggl(\frac{hr}{q_\lambda}\biggr)
    }
  }
\\&+
  \frac {bq_{\lambda-1}}{w_\ell}
  \sum_{0\leq h<q_{\lambda-1}}{
    \bigl\lvert G_{\lambda-1}(h)\bigr\rvert^2
    \sum_{0\leq r<R}{
      \varepsilon'_r
      e\biggl(\frac{hr}{q_{\lambda-1}}\biggr)
    }
  }
\Biggr\rvert
+
O\biggl(\frac r{q_{\lambda-1}}\biggr)
\\&
\leq
\frac 1R
\Biggl\lvert
  \sum_{0\leq h<q_\lambda}{
    \Bigl\lvert G_\lambda(h)\bigr\rvert^2
    \sum_{0\leq r<R}{
      \varepsilon_r
      e\biggl(\frac{hr}{q_\lambda}\biggr)
    }
  }
\Biggr\rvert
\\&+
\frac 1R
\Biggl\lvert
  \sum_{0\leq h<q_{\lambda-1}}{
    \bigl\lvert G_{\lambda-1}(h)\bigr\rvert^2
    \sum_{0\leq r<R}{
      \varepsilon'_r
      e\biggl(\frac{hr}{q_{\lambda-1} }\biggr)
    }
  }
\Biggr\rvert
+
O\biggl(\frac r{q_{\lambda-1} }\biggr)
.
\end{align*}
By Cauchy-Schwarz we obtain
\begin{align*}
\hskip 15pt&\hskip -15pt
\frac 1{R^2}
\Biggl\lvert
  \sum_{0\leq h<q_\lambda}{
    \bigl\lvert G_\lambda(h)\bigr\rvert^2
    \sum_{0\leq r<R}{
      \varepsilon_r
      e\biggl(\frac{hr}{q_\lambda}\biggr)
    }
  }
\Biggr\rvert^2
\\
&\leq
\frac 1{R^2}
\sum_{0\leq h<q_\lambda}\bigl\lvert G_\lambda(h)\bigr\rvert^4
\sum_{0\leq h<q_\lambda}
\Biggl\lvert
  \sum_{0\leq r<R}{
    \varepsilon_{r}
    \e\biggl(\frac{hr}{q_\lambda}\biggr)
  }
\Biggr\rvert^2
\\
&\leq
\frac 1{R^2}
\sum_{0\leq h<q_\lambda}\bigl \lvert G_\lambda(h)\bigr \rvert^4
\sum_{0\leq h<q_\lambda}
\sum_{0\leq r_1,r_2<R}
\varepsilon_{r_1}\overline{\varepsilon_{r_2}}
\e\biggl(h\frac{r_1-r_2}{q_\lambda}\biggr)
\\
&=
\frac {q_\lambda}{R^2}
\sum_{0\leq h<q_\lambda}\bigl \rvert G_\lambda(h)\bigr \rvert^4
\sum_{0\leq r_1,r_2<R}
\varepsilon_{r_1}\overline{\varepsilon_{r_2}}
\delta_{r_1,r_2}
\\
&=
\frac {q_\lambda} R
\sum_{0\leq h<q_\lambda}\bigl \lvert G_\lambda(h)\bigr \rvert^4,
\end{align*}
similarly for $\lambda-1$.
Using Lemma \ref{lem:alpha_carry_lemma}, we get
\begin{align*}
\hskip 15pt&\hskip -15pt
\frac 1R\sum_{0\leq r<R}
\bigl\lvert \gamma_r \bigr\rvert
=
\lim_{\ell\ra\infty}
\frac 1R\sum_{0\leq r<R}
\Biggl\lvert
\frac 1{w_\ell}\sum_{0\leq n<w_\ell}g(n+r)\overline{g(n)}
\Biggr\rvert
\\
&=
\lim_{k\ra\infty}
\frac 1R\sum_{0\leq r<R}
\Biggl\lvert
\frac 1{w_\ell}\sum_{0\leq n<w_\ell}g_\lambda(n+r)\overline{g_\lambda(n)}
\Biggr\rvert
+O\biggl(\frac R{q_{\lambda-1}}\biggr)
\\
&\leq
\vastl[
\Biggl(\sum_{0\leq h<q_{\lambda-1}}\bigl \lvert G_{\lambda-1}(h)\bigr \rvert^4\Biggr)^{1/2}
+
\Biggl(\sum_{0\leq h<q_\lambda}\bigl \lvert G_\lambda(h)\bigr \rvert^4\Biggr)^{1/2}
\vastr]
\biggl(
\frac {q_\lambda}R
\biggr)^{1/2}
+
O\biggl(\frac R{q_{\lambda-1}}\biggr).
\end{align*}
Using the hypothesis of the theorem and Proposition~\ref{prp:uniformity}, we get
$\sup_h \bigl\lvert G_\lambda(h)\bigr\rvert=o(1)$ as $\lambda\rightarrow\infty$.
By Parseval's identity this implies
\[
\sum_{0\leq h< q_\lambda} |G_\lambda(h)|^4 =o(1).
\]
By a straightforward argument
we conclude that
\[
\frac 1R\sum_{0\leq r<R}
\bigl\lvert
\gamma_r
\bigr\rvert
=o(1)
\]
as $R\ra\infty$. 
Since $g$ is bounded, an application of Lemma~\ref{lem:L1} completes the proof of Theorem~\ref{thm:main}.


\end{document}